\def\bR{\mathbb R}
\def\bE{\mathbb E}
\def\bW{\mathbb W}
\newtheorem{Theorem}{Theorem}
\newtheorem{lemma}{Lemma}
\newtheorem{Corollary}{Corollary}
\newtheorem{proposition}{Proposition}
\newtheorem{Assumption}{Assumption}
\newtheorem{example}{Example}
\def\argmin{\mathop{\rm arg\,min}}
\def\rank{{\rm rank}}
\def\supp{{\rm supp}}
\def\Sum{\overset{n}{\underset{i=1}{\sum}}}
\begin{document}

\title{Non-asymptotic approach to varying coefficient model}
\author{Olga Klopp\footnote{MODAL'X, University Paris Ouest Nanterre (kloppolga@math.cnrs.fr)} \hskip 0.25 cm and Marianna Pensky \footnote{Department of Mathematics, University of Central Florida (marianna.pensky@ucf.edu)} }
\date{ }
\maketitle



%



\begin{abstract}
In the present paper we consider the varying coefficient model   which represents a useful tool for exploring   dynamic patterns in many applications. Existing methods typically provide asymptotic  evaluation of precision of estimation procedures under the assumption that the number of observations tends to infinity. In practical applications, however, only a finite number of measurements are available. In the present paper we focus on a non-asymptotic approach to the problem. We propose a novel estimation procedure which is based on recent developments in matrix estimation.
 In particular, for our estimator, we obtain upper bounds
for the  mean squared   and the pointwise estimation errors. The obtained oracle inequalities are non-asymptotic and hold for finite sample size.
\end{abstract}


\section{Introduction} 
\label{sec:introduction}

In the present paper we consider the varying coefficient model   which represents a useful tool for exploring   dynamic patterns   in economics, epidemiology, ecology, etc. This model can be viewed as a natural extension of the classical linear regression model and allows parameters that are   constant in regression model to evolve with certain characteristics of the system such as  time or  age in epidemiological studies.

The varying coefficient models  were introduced by Cleveland, Grosse and Shyu \cite{cleveland_VCM} and 
Hastie and Tibshirani \cite {hastie_VCM} and have been extensively studied in the past 15 years.  
 The estimation procedures for varying coefficient model are e.g. based on 
the kernel-local polynomial smoothing (see e.g. \cite{wu,hoover,fan_1999,kauermann}), the  polynomial spline (see e.g. \cite{huang_2002,huang_2004,huang_shen}), the smoothing spline (see e.g. \cite{hastie_VCM,hoover,chiang}).
 More recently e.g. Wang et al \cite{wang} proposed a new procedure based on a local rank estimator; Kai et al \cite{kai} introduced a semi-parametric quantile regression procedure and studied an effective variable selection procedure;  Lian \cite{lian1} developed a penalization based approach for both variable selection and constant coefficient identification in a consistent framework. 
 For more detailed discussions of the existing methods and possible applications, we refer to the very interesting survey of Fan and Zhang \cite{fan}. 
 
 Existing methods typically provide asymptotic evaluation of precision of estimation procedures under the assumption that the number of observations tends to infinity. In practical applications, however, only a finite number of measurements are available. In the present paper, we focus on a non-asymptotic approach to the problem. We propose a novel estimation procedure which is based on recent developments in matrix estimation, in particular,  matrix completion. In the matrix completion problem, one observes a small set of entries of a matrix and needs to estimate the remaining entries using these data.   A standard assumption that allows such  completion to be successful is  that the unknown matrix has 
 low rank or has approximately low rank. 
The matrix completion problem has attracted a considerable attention   in the past few years (see, e.g., \cite{candes-recht-exact,keshavan-few, Koltchinskii-Tsybakov, wainwright-weighted, klopp-general}). The most popular methods for matrix completion are based on nuclear-norm minimization which we adapt in the present paper.



\subsection{Formulation of the problem}

Let $(W_i,t_i,Y_i)$, $i=1,\dots,n$ be sampled independently from the   varying coefficient model 
\begin{equation}\label{vcm}
Y=W^{T}f(t)+\sigma \xi.
\end{equation} 
Here, $W\in\mathbb R^{p}$ are random vectors of predictors, $f(\cdot)=\left (f_1(\cdot),\dots,f_p(\cdot)\right )^{T}$ is an unknown vector-valued function of regression coefficients and $t\in[0,1]$ is a random variable independent of $W$. Let $\mu$ denote its distribution. 
The   noise variable  $\xi$ is independent of $W$ and $t$ and is  such that 
$\bE(\xi)=0$ and $\bE(\xi^{2})=1$, $\sigma>0$ denotes the   noise level.

The goal is to estimate the vector function $f(\cdot)$ on the basis of observations  $(W_i,t_i,Y_i)$, $i=1,\dots,n$.
Our estimation method  is based on the approximation of the unknown functions $f_i(t)$ using a basis expansion. 
This approximation generates the coordinate matrix $A_0$. 
 In the above model, some of the components of vector function $f$ are constant. The larger the part of the constant  regression coefficients, the smaller the  rank of the coordinate matrix $A_0$ (the rank of matrix  $A_0$  does not exceed the number of time-varying components of vector $f(\cdot)$ by more than one).
We suppose that the first element of this basis is just a constant function on $[0,1]$ (indeed, this is true for vast majority of bases on a finite interval). In this case,   if the component $f_i(\cdot)$ is   constant, then, it has only one non-zero coefficient in its expansion over the basis.
This suggest the idea  to take into account the number of constant regression coefficients using the rank of the coordinate matrix $A_0$.
%
%

Our procedure involves  estimating $A_0$ using nuclear-norm penalization 
 which is now a well-established proxy for rank penalization in the compressed sensing literature. 
Subsequently, the estimator of the coordinate matrix is plugged into the expansion yielding the estimator $\hat f(\cdot)=\left (\hat f_1(\cdot),\dots,\hat f_p(\cdot)\right )^{T}$ of the vector function $f(t)$. For this estimator  we obtain upper bounds on the  mean squared error  $\dfrac{1}{p}\underset{i=1}{\overset{p}{\Sigma}}\Vert \hat f_i-f_i \Vert_{L_2(d\mu)}^{2}$   and on the pointwise estimation error  
$\dfrac{1}{p}\underset{i=1}{\overset{p}{\Sigma}}\vert \hat f_i(t)-f_i(t) \vert$ for any $t\in\supp(\mu)$ (Corollary \ref{cor_1}). These oracle inequalities are non-asymptotic and hold for finite values of $p$ and $n$. The results in this paper 
 concern random measurements and random noise and so they hold with high probability.


 \subsection{Layout of the paper}

   The remainder of this paper is organized as follows. In Section \ref{sec:notations}  we introduce   notations used throughout the paper. 
In Section \ref{estimation method},  we describe in details our estimation method, give examples of the possible choices of the basis 
(Section \ref{basis}) and  introduce an   estimator for the coordinate matrix $A_0$ (Section \ref{estimator}). 
Section \ref{main}  presents the main results of the paper. In particular, Theorems \ref{thm2} and \ref{thm3} in Section \ref{main}
establish  upper bounds for estimation error of  the  coordinate matrix $A_0$ measured in Frobenius norm.
Corollary \ref{cor_1} provides non-asymptotic upper bounds for the mean squared and pointwise risks of the estimator 
of the vector function $f$.
Section \ref{orth}  considers an important particular case  of the  orthogonal dictionary.


\subsection{Notations} 
\label{sec:notations} 
We provide a brief summary of the notation used throughout this paper. 
Let $A,B$ be matrices in $\mathbb{R}^{p\times l}$, $\mu$ be a probability distribution on $(0,1)$  
and $\psi(\cdot)$ be a vector-valued function.
\begin{itemize}
\item 
For any vector  $\eta \in \mathbb{R}^{p}$, we denote  the standard $l_1$ and $l_2$ vector norms by 
$ \left\Vert \eta \right\Vert_{1}$ and $ \left\Vert \eta \right\Vert_{2}$, respectively.
\item 
$\left\Vert\cdot \right\Vert_{L_2\left (d\mu\right )}$ and $\left\langle\cdot\,,\cdot\right\rangle_{L_2\left (d\mu\right )}$ 
are the norm and the scalar product in the space $L_2\left ((0,1),d\mu\right )$.
\item 
For $\psi(\cdot)=\left (\psi_1(\cdot),\dots,\psi_p(\cdot)\right )^{T}$, we set 
$ \left\Vert \psi (\cdot)\right\Vert_\infty=\underset{i=1,\dots,p}{\max}\;\underset{t\in \supp (\mu)}{\sup}\left \vert \psi_i(t)\right \vert$ 
and $\left\Vert \psi (\cdot)\right\Vert_{L_2(d\mu)}=\underset{1\leq i \leq p}{\max}\left\Vert \psi_i \right\Vert_{L_2(d\mu)}$
\item 
We define the \textit{scalar product of matrices}
$\langle A,B\rangle =\mathrm{tr}(A^{T}B)$ where $\mathrm{tr}(\cdot)$ denotes the trace of a square  matrix.
\item Let 
\begin{equation*}
\Vert A\Vert_*=\underset{j=1}{\overset{\min(p,l)}{\Sigma}}\sigma_j(A)\quad\text{and}\quad \Vert A\Vert_2=\left (\underset{j=1}{\overset{\min(p,l)}{\Sigma}}\sigma^{2}_j(A)\right )^{1/2}
\end{equation*}
be respectively the \textit{trace} and \textit{Frobenius} norms of the matrix $A$. Here  $(\sigma_j(A))_j$ are the singular values of $A$ ordered decreasingly.
\item 
Let $\Vert A\Vert=\sigma_1(A)$. 
%
 \item  For any numbers, $a$ and $b$, denote $a \vee b = \max(a,b)$ and $a \wedge b = \min(a,b)$.
 

\item Denote  the $k\times k$ identity  matrix by $\mathbb{I}_{k}$.
\item Let $(s-1)$ denote the number of non-constant $f_i(\cdot)$.
\item 
In what follows, we use the symbol $C$ for a generic positive
constant, which is independent of $n$, $p$, $s$ and $l$, and may take different
values at different places.
\end{itemize}


\section{Estimation method}\label{estimation method}

The first step of our estimation method is the approximation of the unknown functions $f_i(t)$ by expanding them 
over an appropriate  basis. This approximation generates the coordinate matrix $A_0$. Matrix $A_0$ is estimated using penalized risk minimization. 
The estimator of the coordinate matrix is plugged into the expansion  yielding the estimator of the vector function $f$. 

\subsection{Basis expansion}\label{basis}

 Let $(\phi_i(\cdot))_{i=1,\dots,\infty}$ be an orthonormal basis in $L_2\left ((0,1),d\mu\right )$, 
$l\in \mathbb N$ and $\phi(\cdot)=\left (\phi_1(\cdot),\dots,\phi_l(\cdot)\right )^{T}$. We assume that basis functions satisfy the following condition: there exists $c_\phi < \infty$ such that   
\begin{equation} \label{eq:basis_assum}
\left\Vert \phi^{T}(t)\right\Vert^{2}_2 = \sum_{j=1}^l |\phi_j(t)|^2 \leq c_\phi^2\, l,
\end{equation}
for any $l \geq 1$ and any $t \in [0,1]$. Note that this condition is satisfied for most of the usual bases.

 We introduce  the coordinate matrix $A_0 \in\mathbb R^{p\times l}$ with elements 
  \begin{equation*}
  a^{0}_{kj}=\left\langle f_k,\phi_j\right\rangle_{L_2(d\mu)},\ \ k=1, \cdots, p,\, j=1, \cdots, l.
  \end{equation*}
 For each $k=1,\dots,p$, we have 
 \begin{equation}\label{eq:expansions}
 f_k(t)=\underset{j=1}{\overset{l}{\Sigma}}a^{0}_{kj}\phi_j(t)+\rho^{(l)}_k(t).
 \end{equation}

Denote the remainder by $\rho^{(l)}(\cdot)=(\rho_1^{(l)}(\cdot),\dots,\rho_p^{(l)}(\cdot))^{T}$.
We assume that the basis $(\phi_i(\cdot))_{i=1,\dots,\infty}$   guarantees   good approximation of $f_k$ by 
$\underset{j=1}{\overset{l}{\Sigma}}a^{0}_{kj}\phi_j(t)$, that is,

\begin{Assumption}\label{approximation_sup} 
 We assume that the basis satisfies condition \eqref{eq:basis_assum} and that 
there exists a  positive constant $b$ such that, for any $l\geq 1$  
\begin{eqnarray}
\left\Vert \rho^{(l)}(\cdot)\right\Vert_{\infty}& \leq &  b\ l^{-\gamma},\quad   \gamma > 0.
\end{eqnarray}
 \end{Assumption}  
 Often approximation in $L_2-$norm gives better rates of convergence. In order to get  upper bounds on the  mean squared error we will use the following additional assumption:
  \begin{Assumption}\label{approximation_L2}  There exist $b_1>0$ such that, for any $l\geq 1$  
  $$  \left\Vert \rho^{(l)}(\cdot)\right\Vert_{L_{2}(d\mu)}\leq b_1\,l^{-(\gamma+1/2)},\quad 
   \gamma > 0.$$
  \end{Assumption}
 
Let us give few examples of possible choices of the basis.

\begin{example} \label{example1}
{\rm Assume that  $d\mu=g(t)\,dt$ and  
 function $g$ is bounded away from zero and infinity, i.e. there exist absolute constants $g_1$ and $g_2$ such that for any $t\in \supp (\mu)$ 
 \begin{equation}  \label{g_condition}
g_1 \leq g(t) \leq g_2, \quad 0 < g_1 < g_2 < \infty.
 \end{equation}
 Denote $\widetilde{\phi}_j (t) = e^{2\,i\,\pi\, j\,t}$, $j\in\mathbb{Z}$, the standard Fourier basis  of $L_2\left ((0,1) \right )$.
Then, it is easy to check that $\phi_j (t) = \widetilde{\phi}_j (t)/ \sqrt{g(t)}$, $j\in\mathbb{Z}$, 
is an orthonormal basis of $L_2\left ((0,1),g\right )$. 
Moreover, condition \eqref{eq:basis_assum} holds with $c_\phi^2 = g_1^{-1}$.

For $\gamma>0$, consider the Sobolev space $\bW_{\gamma }(0,1)$ of functions $F \in  L_2 (0,1)$  with the norm
$\|F\|_{\bW_\gamma}^2 = \int_{-\infty}^\infty |\omega|^{2 \gamma+1} |\hat{F} (\omega) |^2 d \omega$ where $\hat{F}(\omega)$
is the Fourier transform of $F$. Then, by Theorems 9.1 and 9.2 of \cite{mallat}, one has
\begin{equation} \label{Fourier_Mallat}
\sum_{j=-\infty}^\infty |j|^{2 \gamma+1} | \langle F, \widetilde{\phi}_j  \rangle |^2 \leq C_\gamma \| F \|_{\bW_\gamma}^2,  
\end{equation}
where $C_\gamma$ is an absolute constant which depends on $\gamma$ only. }
{\rm
Assume that for some $A < \infty$ the functions $f_k$ belong to a Sobolev ball of radius $A$, i.e.  
 \begin{equation}  \label{eq:fk_Fourier}
\max_{k=1, \cdots, p}\ \left\| f_k (\cdot) \sqrt{g(\cdot)} \right\|_{\bW_\gamma} \leq A, \quad \gamma > 0.
 \end{equation}
Let $l=2N+1$, so that 
 \begin{equation*}  
f_k (t) =   \sum_{j=-N}^N a^{0}_{kj} \phi_j(t), \quad 
\rho^{(l)}_k(t) = \sum_{|j|>N}  a^{0}_{kj} \phi_j(t),
 \end{equation*}
where $a^{0}_{kj} =  \langle f_k (t) \sqrt{g(t)}, \widetilde{\phi}_j (t) \rangle$. 
Then, it follows from equations \eqref{g_condition},  \eqref{Fourier_Mallat} and \eqref{eq:fk_Fourier} that 
\begin{equation*} 
\begin{split}
\left\| \rho^{(l)}(\cdot)\right\|_{\infty}^2   & \leq g_1^{-1} \ \left[ \sum_{|j| > N} |j|^{-2\gamma-1} \right] \ \left[ \max_{k=1, \cdots, p}\ 
 \sum_{|j| > N} |j|^{2\gamma+1}\, |a^{0}_{kj}|^2 \right] \\
& \leq \frac{A^2 C_\gamma}{g_1} \ \sum_{|j| > N} |j|^{-2\gamma-1}
  \leq \frac{A^2 C_\gamma}{2\,g_1\,\gamma \,N^{2\gamma}}
\end{split}
 \end{equation*}
 where $N=(l-1)/2$ 
and
 \begin{equation*}
\begin{split}
\left\| \rho^{(l)}(\cdot)\right\|_{L_2(g)}^2  \leq  N^{-(2 \gamma+1)}  A^2 C_\gamma, 
\end{split}
\end{equation*} 
so that  Assumptions \ref{approximation_sup} and \ref{approximation_L2} hold.}
\end{example} 
%
%
\begin{example}  \label{example2}
{\rm
Consider a wavelet $\psi$ with a bounded support of length $C_\psi$ and with $\gamma^*$ 
vanishing moments and choose $l = 2^H$ where $H$ is a positive integer. 
Construct  a periodic wavelet basis $\psi_{h,i} (t)$, $h=-1, \cdots, J-1$, $i=0, \cdots, 2^h-1$,
with $\psi_{-1,0} (t) = 1$ and  $\psi_{h,i} (t)  = 2^{h/2} \psi(2^h t - i)$  for $h \geq 0$. 
As in Example~\ref{example1}, set  $\phi_j (t) =  \phi_{h, i} (t) = \psi_{h,i} (t)/\sqrt{g(t)}$ 
where $j = 2^h + i +1$. 
Note that condition \eqref{eq:basis_assum} holds in this case with $c_\phi^2 = g_1^{-1} C_\psi \| \psi \|^2_\infty$.

Then, each function $f_k(t)$ can be expanded into a wavelet series 
 \begin{equation*}  
f_k (t) =   \sum_{h=-1}^{H-1} \sum_{i=0}^{2^h-1}   a^{0}_{k, h, i}\, \phi_{h, i} (t), \quad 
\rho^{(l)}_k(t) = \sum_{h=H}^\infty \sum_{i=0}^{2^h-1}  a^{0}_{k,h,i}\, \phi_{h, i} (t), 
 \end{equation*}
where $a^{0}_{k,h,i} =  \langle f_k (\cdot) \sqrt{g(\cdot)}, \psi_{h,i} (\cdot) \rangle$.

Theorem 9.4 of \cite{mallat} states that for $F \in \bW_\gamma (0,1)$ one has
\begin{equation*} 
\sum_{h=-1}^\infty  2^{h (2\gamma+1)} \sum_{i=0}^{2^h-1}   | \langle F, \psi_{h,i}  \rangle |^2 \leq C_\gamma \| F \|_{\bW_\gamma}^2,  
\end{equation*}
where $C_\gamma$ is an absolute constant which depends on $\gamma$ only, provided $\gamma < \gamma^*$. 
Then, under assumptions \eqref{g_condition} and  \eqref{eq:fk_Fourier}, as in Example \ref{example1},
Assumption \ref{approximation_sup} holds. For example, 
recalling that $H =\log_2 l$ and that length of support of $\psi$ is bounded by $C_\psi$, obtain
\begin{equation*} 
\begin{split}
  \left\| \rho^{(l)}(\cdot)\right\|_{L_2(g)}^2 & \leq  2^{-H (2 \gamma+1)}   \max_{k=1, \cdots, p}\ 
\sum_{h=H}^\infty  2^{h (2\gamma+1)}\ \sum_{i=0}^{2^h-1} |a^{0}_{k,h,i}|^2 
\leq  A^2 C_\gamma  l^{-(2\gamma+1)},\\
%
\left\| \rho^{(l)}(\cdot)\right\|^2_\infty  & \leq 
 (2 \gamma g_1)^{-1} C_\psi\, \| \psi \|^2_\infty\  2^{-2H \gamma}   \max_{k=1, \cdots, p}\  
\sum_{h=-1}^\infty 2^{h (2\gamma+1)}\ \sum_{i=0}^{2^h-1} |a^{0}_{k,h,i}|^2\\
 & \leq  A^2 (2 \gamma g_1)^{-1} C_\psi\, \| \psi \|^2_\infty\  l^{- 2\gamma}, 
\end{split}
 \end{equation*}
where $\|\psi\|_\infty = \sup_t |\psi(t)|.$
}

 \end{example}
%
%
%
%
\begin{example}  \label{example3}
{\rm Suppose that $f_i(t)$ belong to a finite $k-$dimensional sub-space of $L_2\left ((0,1),d\mu\right )$. For example, $f_i(t)$ are polynomials of degree less than $k$. Then, choosing $l=k$ and an orthonormal basis in this sub-space, we have trivially $\rho^{(l)}(\cdot)=0$.
}
\end{example}
%
%

\subsection{Estimation of the coordinate matrix}\label{estimator}

  Denoting $X=W\phi^{T}(t)$, we can rewrite \eqref{vcm} in the following form
 \begin{equation}\label{model}
 Y =\mathrm{tr}\left (A_0 X^{T}\right )+W^{T}\rho^{(l)}(t)+\sigma\xi.
 \end{equation} 
  We suppose that some of the functions $f_i(\cdot)$ are constant and let $(s-1)$ denote the number of non-constant $f_i(\cdot)$. This parameter, $s$, plays an important role in what follows. Note that $\rank\left (A_0\right )\leq s$.

Using observations $(Y_i,X_i)$ we define the following estimator of $A_0$:
 \begin{equation}\label{estimator_orth}
 \hat{A}=\argmin\left \{\dfrac{1}{n}\Sum \left (Y_i-\left\langle X_i,A\right\rangle\right )^{2}+\lambda\,\left\Vert A\right\Vert_{*}\right \},
 \end{equation}
 where $\lambda$ is the regularization parameter. This  penalization, using the trace-norm,  is now quite standard in matrix completion problem 
and  allows one to recover a matrix from under-sampled measurements. 
 
 Using estimator \eqref{estimator_orth} of the coordinate matrix $A_0$, we  recover $f(t)$ as 
$$\hat{f}(t)=\hat{A}\phi(t).$$


 \subsection{Assumptions about the dictionary and the noise}

We assume that the vectors $W_i$ are i.i.d copies of a random vector $W$ having distribution 
$\Pi$ on a given set of vectors $\mathcal{X}$. Using rescaling, we can suppose that $ \left\Vert W\right\Vert_{2}\leq 1$ almost surely.
%
Let $\bE \left (W\,W^{T}\right )= \Omega$ and  $\omega_{\max}$, $\omega_{\min}$ denote respectively its maximal and minimal singular values. We need the following assumption on the distribution of $W$.
 \begin{Assumption}\label{ass_om1}
The matrix  $\Omega = \bE \left (W\,W^{T}\right )$  is positive definite. 
 \end{Assumption}
Let $\Vert A\Vert _{L_2(\Pi\otimes\mu)}^{2}=\bE\left(\langle X,A\rangle^{2}\right)$.  An easy computation leads to
 \begin{equation*} 
 \begin{split}
 \Vert A\Vert _{L_2(\Pi\otimes\mu)}^{2}&
=\bE\left(\langle W,A\,\phi(t)\rangle^{2}\right)\\& 
 = \bE_{t}\left (\bE_{W}\left(\langle W,A\,\phi(t)\rangle^{2} \right)\right )
 \end{split}
 \end{equation*}
 and 
  \begin{equation*}
  \begin{split}
  \bE_{W}\left(\langle W,A\,\phi(t)\rangle^{2} \right)&=\bE_{W}\left(\mathrm{tr}\left (\left (A\,\phi(t)\right )^{T} W\,W^{T}A\,\phi(t)\right ) \right)\\&=\bE_{W}\left(\mathrm{tr}\left ( W\,W^{T}A\,\phi(t)\left (A\,\phi(t)\right )^{T}\right ) \right)\\&= \left\langle \bE_{W}\left(W^{T}\,W\right),A\,\phi(t)\left (A\,\phi(t)\right )^{T}\right\rangle\\&= \left\langle \Omega,A\,\phi(t)\left (A\,\phi(t)\right )^{T}\right\rangle.
  \end{split}
  \end{equation*}

  By definition we obtain
   \begin{equation*} 
   \begin{split}
   \left\langle \Omega,A\,\phi(t)\left (A\,\phi(t)\right )^{T}\right\rangle\geq 
\omega_{\min}\left\Vert A\,\phi(t)\right\Vert^{2}_{2}.
   \end{split}
   \end{equation*}
   Finally we compute
   \begin{equation}\label{RI_exp_orth}
   \begin{split}
   \Vert A\Vert _{L_2(\Pi\otimes\mu)}^{2}&\geq \omega_{\min}\,\bE_{t}\left ( \left\Vert A\,\phi(t)\right\Vert^{2}_{2}\right ) 
   =\omega_{\min}\left\Vert A\right\Vert^{2}_{2}
   \end{split}
   \end{equation}
where in the last display we used that $(\phi_i(\cdot))_{i=1,\dots,\infty}$ is an orthonormal basis in $L_2\left ((0,1),d\mu\right )$.

We consider the case of \textit{sub-exponential noise} which satisfies the following condition
 \begin{Assumption}\label{noise} There exist a constant $K>0$ such that
 $$\underset{i=1,\dots,n}{\max}\bE\exp\left (\vert\xi_i\vert/K\right )\leq  e.$$
 \end{Assumption}
\noindent  
For instance, if $\xi_i$ are i.i.d. standard Gaussian we can take $K=1$.

\section{Main Results}\label{main}
   
  Let
  \begin{equation*} 
   \Sigma_R=\dfrac{1}{n} \Sum \epsilon_i X_i\qquad\text{and}\qquad \Sigma=\dfrac{1}{n}\Sum \left (W^{T}_i\rho^{(l)}(t_i)+\sigma\,\xi_i\right )X_i
   \end{equation*}
   where $\{\epsilon_i\}_{i=1}^{n}$ is an i.i.d. Rademacher sequence. These stochastic terms play an important role in the choice of the regularization parameter $\lambda$. 
   
   We introduce the following notations:
   
 $$M=\mathrm{tr}(\Omega)\vee \left (l\,\omega_{\max}\right )\quad\text{and}\quad n^{**}=\dfrac{C\,c^{2}_{\phi}\,l\,\log(d)}{\omega^{2}_{\min}}\left [(M\,s)\vee 1\right ]. $$
%
      The following theorem gives a general upper bound  on the prediction error for the estimator $\hat A$ given by \eqref{estimator_orth}. Its proof is given in Appendix \ref{proof-thm2}. 
 \begin{Theorem}\label{thm2}
 Let $\lambda\geq 3 \left\Vert \Sigma\right\Vert$ and suppose that Assumption  \ref{ass_om1} holds.
   Then, with probability at least $1-2/d$,
   \begin{itemize}
   \item[(i)] 
   \begin{equation*} 
    \begin{split}
   \hskip - 0.75 cm \left \Vert \hat A-A_0\right \Vert_2^{2}\leq C\,\max\left \{\dfrac{s}{\omega^{2}_{\min}}\left (
    \lambda^{2} +\left\Vert A_{0}\right\Vert^{2}_*\,\dfrac{c^{2}_{\phi}\,l\,M\,\log(d)}{n}\right ),\dfrac{c_{\phi}\,\left\Vert A_0\right\Vert^{2}_*}{\omega_{\min}}\,\sqrt{\dfrac{\log(d)\,l}{n}}\right \}.
    \end{split}
    \end{equation*}
    \item [(ii)] If, in addition $n\geq n^{**} $, then 
    \begin{equation*} 
    \begin{split}
    \left \Vert \hat A-A_0\right \Vert_2^{2}\leq \dfrac{C\,s\,\lambda^{2}}{\omega^{2}_{\min}}
    \end{split}
    \end{equation*}
   \end{itemize}

 where $d=l+p$.
 \end{Theorem}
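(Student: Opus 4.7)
The plan is to follow the standard recipe for nuclear-norm penalized matrix estimators, with two adaptations to the present setting: the ``noise'' $Y_i-\langle X_i,A_0\rangle$ contains the deterministic approximation bias $W_i^T\rho^{(l)}(t_i)$ in addition to $\sigma\xi_i$, which is absorbed into the matrix $\Sigma$ as defined in the statement, and the restricted isometry property must be deduced from the special structure $X_i=W_i\phi^T(t_i)$.

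First I would write down the basic inequality coming from optimality of $\hat A$. Expanding the squared residuals and using $Y_i-\langle X_i,A_0\rangle=W_i^T\rho^{(l)}(t_i)+\sigma\xi_i$ gives
\begin{equation*}
\frac{1}{n}\sum_{i=1}^n\langle X_i,\hat A-A_0\rangle^2\leq 2\langle\Sigma,\hat A-A_0\rangle+\lambda\bigl(\|A_0\|_*-\|\hat A\|_*\bigr).
\end{equation*}
Matrix trace-operator duality yields $|\langle\Sigma,\hat A-A_0\rangle|\leq\|\Sigma\|\,\|\hat A-A_0\|_*$, so under $\lambda\geq 3\|\Sigma\|$ the right-hand side is dominated by $\tfrac{2\lambda}{3}\|\hat A-A_0\|_*+\lambda(\|A_0\|_*-\|\hat A\|_*)$. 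Using the SVD of $A_0$ and the sub-differential characterization of the nuclear norm, I would decompose $\hat A-A_0$ along the tangent space at $A_0$ and its orthogonal complement (projections $\mathcal{P}_{A_0}$ and $\mathcal{P}_{A_0}^\perp$), obtaining
\begin{equation*}
\|A_0\|_*-\|\hat A\|_*\leq\|\mathcal{P}_{A_0}(\hat A-A_0)\|_*-\|\mathcal{P}_{A_0}^\perp(\hat A-A_0)\|_*.
\end{equation*}
Combining this with the bound above and invoking $\rank(\mathcal{P}_{A_0}(M))\leq 2\rank(A_0)\leq 2s$ together with the elementary inequality $\|M\|_*\leq\sqrt{\rank(M)}\,\|M\|_2$, I obtain the cone-type inequality $\|\hat A-A_0\|_*\leq C\sqrt{s}\,\|\hat A-A_0\|_2$ in the ``good'' regime.

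The main technical obstacle is a restricted strong convexity (RSC) statement: with probability at least $1-1/d$, uniformly on the relevant restricted set,
\begin{equation*}
\frac{1}{n}\sum_{i=1}^n\langle X_i,A\rangle^2\geq\tfrac{1}{2}\|A\|_{L_2(\Pi\otimes\mu)}^2-C\,\|A\|_*^2\,\frac{c_\phi^2\,l\,M\,\log d}{n}.
\end{equation*}
This is the heart of the proof and requires a matrix Bernstein-type inequality for the centered random matrices $X_iX_i^T-\bE(XX^T)$ combined with a peeling argument over nuclear-norm balls; the bound $\|\phi(t)\|_2^2\leq c_\phi^2\,l$ from \eqref{eq:basis_assum} together with the variance parameter $M=\mathrm{tr}(\Omega)\vee l\omega_{\max}$ are precisely what produce the factor $c_\phi^2\,l\,M$. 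Once RSC is in hand, \eqref{RI_exp_orth} yields $\|A\|_{L_2(\Pi\otimes\mu)}^2\geq\omega_{\min}\|A\|_2^2$, and plugging in the cone condition together with the basic inequality gives, after standard quadratic manipulations,
\begin{equation*}
\tfrac{\omega_{\min}}{2}\|\hat A-A_0\|_2^2\leq C\Bigl(\sqrt{s}\,\lambda\,\|\hat A-A_0\|_2+\|A_0\|_*^2\,\frac{c_\phi^2\,l\,M\,\log d}{n}\Bigr),
\end{equation*}
which, solved in $\|\hat A-A_0\|_2$, reproduces the first argument of the max in (i). The second argument is a fallback used when $\|A_0\|_*$ is so large that the deviation term in the RSC inequality dominates: in that regime one controls $\|\hat A-A_0\|_2^2$ crudely via a uniform bound on the symmetrized empirical process $\|\Sigma_R\|$ (hence the extra $1/d$ in the failure probability, matching the stated $1-2/d$), which yields the weaker rate $\sqrt{l\log d/n}$. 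Part (ii) is then immediate: the condition $n\geq n^{**}$ is precisely what makes the extra term $s\|A_0\|_*^2\,c_\phi^2\,l\,M\,\log d/(n\,\omega_{\min}^2)$ smaller than $s\lambda^2/\omega_{\min}^2$ and simultaneously rules out the fallback regime, leaving the clean bound $Cs\lambda^2/\omega_{\min}^2$.
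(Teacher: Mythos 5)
Your skeleton matches the paper's argument (basic inequality from optimality, trace/operator duality, projection onto the tangent space of $A_0$ giving the cone condition, a restricted-isometry-type lemma, a quadratic solve, and a fallback regime producing the second term of the max, with $n\geq n^{**}$ absorbing the quadratic deviation term and excluding the fallback in part (ii)). However, the two places where the paper does real work are exactly where your sketch is not an argument yet. First, the RSC step: a matrix Bernstein bound for $X_iX_i^T-\bE(XX^T)$ does not exploit the nuclear-norm constraint and would only control the quadratic process uniformly over Frobenius balls (effectively requiring $n\gtrsim pl$). The paper instead bounds $Z_T=\sup_{A}\bigl|\tfrac1n\sum_i\langle X_i,A\rangle^2-\|A\|^2_{L_2(\Pi\otimes\mu)}\bigr|$ by Massart's concentration inequality plus symmetrization and Talagrand's contraction inequality, which reduces the supremum to the \emph{linear} Rademacher process $\langle\Sigma_R,A\rangle\leq\|\Sigma_R\|\,\|A\|_*$; matrix Bernstein is applied only to $\bE\|\Sigma_R\|$ (Lemma \ref{Edelta}), and the peeling is over shells of the $L_2(\Pi\otimes\mu)$-norm, not nuclear-norm balls. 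Crucially, the peeling only works above the threshold $\nu\asymp c_\phi\sqrt{l\log(d)/n}$, which is why the constraint set $\mathcal{C}(r)$ in \eqref{constrain} carries a lower bound on $\|A\|_{L_2(\Pi\otimes\mu)}^2$; the uniform "RSC with tolerance" you state (with no such lower bound) is not what is proved, and with it your quadratic solve alone would make the second term of the max superfluous — an internal inconsistency in your plan.

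Second, the fallback term: in the paper it is not obtained from "a uniform high-probability bound on $\|\Sigma_R\|$" (only $\bE\|\Sigma_R\|$ ever enters, and the entire $2/d$ failure probability comes from the union bound in the peeling of Lemma \ref{thm1}). The fallback arises precisely when $\|H\|_{L_2(\Pi\otimes\mu)}^2< c_\phi\|H\|_2^2\sqrt{64\,l\log(d)/(\log(6/5)\,n)}$, i.e. when the normalized $H$ falls below the peeling threshold so that the RSC lemma cannot be invoked. In that regime one combines $\omega_{\min}\|H\|_2^2\leq\|H\|_{L_2(\Pi\otimes\mu)}^2$ with the threshold inequality to get $\omega_{\min}\leq c_\phi\sqrt{64\,l\log(d)/(\log(6/5)\,n)}$, and then uses the purely deterministic consequence of the basic inequality and $\lambda\geq3\|\Sigma\|$, namely $\|\hat A\|_2\leq\|\hat A\|_*\leq5\|A_0\|_*$, to conclude $\|H\|_2^2\lesssim c_\phi\|A_0\|_*^2\,\omega_{\min}^{-1}\sqrt{l\log(d)/n}$. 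This same bound $\|H\|_2\lesssim\|A_0\|_*$ is also what lets one replace the $\|H\|_2^2$ factor multiplying the RSC deviation term by $\|A_0\|_*^2$ in part (i), a step you use implicitly. As written, your treatment of both the RSC lemma and the fallback regime would not go through without these ingredients.
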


 In order to obtain   upper bounds in Theorem \ref{thm2} in a closed form, it is necessary to obtain a suitable upper bound  for 
 $\left\Vert \Sigma\right\Vert$.
 The following lemma, proved in Section \ref{stochastic}, gives such bound.

 \begin{lemma}\label{delta}
Under Assumptions \ref{approximation_sup} - \ref{noise}, there exists a numerical constant $c^{*}$, that depends only on $K$, such that,
     for all $t>0$  with probability at least $1-2e^{-t}$
      \begin{equation}\label{norm_delta}
          \begin{split}
          \left\Vert \Sigma\right\Vert&\leq \left (\sigma\,c^{*}+\dfrac{2\,b\,\sqrt{s-1}}{l^{\gamma}}\right )\max \left \{\sqrt{\dfrac{M\left (t+\log(d)\right )}{n}},\right. \\&\left .\hskip 4 cm\dfrac{c_{\phi}\,\sqrt{l}\,\left (t+\log(d)\right )\left (\left [K\,\log\left (\dfrac{K\,c_{\phi}}{\omega_{\max}}\right )\right ]\vee 1\right )}{n}\right \}
          \end{split}
          \end{equation}
  where $d=p+l$. 
 \end{lemma}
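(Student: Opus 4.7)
The plan is to apply a non-commutative Bernstein inequality to the sum that defines $\Sigma$. I would split $\Sigma=\Sigma_\xi+\Sigma_\rho$, with $\Sigma_\xi=(\sigma/n)\sum_i\xi_i X_i$ and $\Sigma_\rho=(1/n)\sum_i (W_i^T\rho^{(l)}(t_i))\,X_i$, bound each piece separately, and combine via the triangle inequality.

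Before invoking Bernstein I need to verify that the summands are centered. Independence of $\xi_i$ from $(W_i,t_i)$ together with $\bE\xi_i=0$ handles the noise part. For the remainder part, independence of $W$ and $t$ gives, entrywise,
\begin{equation*}
\bE\bigl[(W^T\rho^{(l)}(t))\,W\,\phi^T(t)\bigr]_{kj}=\sum_{m}\Omega_{mk}\,\langle\rho_m^{(l)},\phi_j\rangle_{L_2(d\mu)},
\end{equation*}
which vanishes because, by the very definition $a^{0}_{mj}=\langle f_m,\phi_j\rangle_{L_2(d\mu)}$, the residual $\rho_m^{(l)}=f_m-\sum_{j'\leq l}a^{0}_{mj'}\phi_{j'}$ is $L_2(d\mu)$-orthogonal to each of $\phi_1,\ldots,\phi_l$.

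Next I would collect the ingredients required by matrix Bernstein. From $\|W\|_2\leq 1$ and \eqref{eq:basis_assum} one has $\|X_i\|\leq c_\phi\sqrt{l}$ almost surely. Since $\phi_1$ is constant, the residual components $\rho^{(l)}_k$ corresponding to constant $f_k$ vanish identically, so at most $s-1$ coordinates of $\rho^{(l)}(t)$ are non-zero; by Cauchy--Schwarz and Assumption \ref{approximation_sup} this yields $|W^T\rho^{(l)}(t)|\leq b\sqrt{s-1}\,l^{-\gamma}$ a.s., hence a deterministic bound $b\,c_\phi\sqrt{(s-1)l}\,l^{-\gamma}$ on $\|(W_i^T\rho^{(l)}(t_i))X_i\|$. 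For the noise summand, Assumption \ref{noise} gives a sub-exponential Orlicz bound on $\|\sigma\xi_iX_i\|$ of scale $\sim\sigma K c_\phi\sqrt{l}$. For the matrix variance, independence of $W$ and $t$ combined with orthonormality of $(\phi_j)$ yields
\begin{equation*}
\bE[X_iX_i^T]=\bE_t[\|\phi(t)\|_2^2]\,\Omega=l\,\Omega,\qquad \bE[X_i^TX_i]=\bE[\|W\|_2^2]\,\mathbb{I}_l=\trace(\Omega)\,\mathbb{I}_l,
\end{equation*}
so both operator norms are dominated by $M=\trace(\Omega)\vee(l\omega_{\max})$. Multiplying by $\bE\xi_i^2=1$ for the noise part and by the almost-sure bound on $(W^T\rho^{(l)}(t))^2$ for the remainder part yields variance parameters $\sigma^2 M$ and $b^2(s-1)M/l^{2\gamma}$ respectively.

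The main technical obstacle is that $\xi_i$ is only sub-exponential rather than bounded, so the textbook bounded-case matrix Bernstein inequality does not apply to $\Sigma_\xi$ directly. I would use a sub-exponential version of matrix Bernstein (truncating $\xi_i$ at a level calibrated to Assumption \ref{noise}, applying the bounded-case inequality to the truncated sum and controlling the residual tail, as in the results used in \cite{klopp-general}); this truncation is precisely the source of the additional factor $(K\log(Kc_\phi/\omega_{\max}))\vee 1$ multiplying the ``boundedness'' term of the bound. Applying this inequality to $\Sigma_\xi$ and standard matrix Bernstein to $\Sigma_\rho$, dividing by $n$, and using $\sqrt{a^2+b^2}\leq a+b$ to pull the combined scale $\sigma c^*+2b\sqrt{s-1}/l^\gamma$ outside the maximum, yields the claimed bound with probability at least $1-2e^{-t}$ (one exponential per part, each of the two $(p+l)$-dimensional Bernstein events absorbed into $\log(d)$).
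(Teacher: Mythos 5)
Your proposal is correct and follows essentially the same route as the paper: the same split $\Sigma=\Sigma_1+\sigma\Sigma_2$, the same orthogonality argument showing the remainder summands are centered, the same bounds $U\le K c_\phi\sqrt{l}$ (resp. $b\,c_\phi\sqrt{(s-1)l}\,l^{-\gamma}$) and $\sigma_Z^2\le M$ (resp. $b^2(s-1)M/l^{2\gamma}$) via $\bE[X X^T]=l\,\Omega$ and $\bE[X^T X]=\mathrm{tr}(\Omega)\,\mathbb{I}_l$, with the bounded matrix Bernstein inequality applied to the remainder part and a sub-exponential (Koltchinskii-type) matrix Bernstein inequality applied to the noise part, combined by the triangle inequality and a union bound. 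The paper simply cites the sub-exponential rectangular version (its Proposition 2) rather than re-deriving it by truncation, but that is the same ingredient you invoke.
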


 The optimal choice of the parameter $t$ in Lemma \ref{delta} is $t=\log(d)$. 
Larger $t$ leads to a slower rate of convergence and a smaller $t$ does not improve the rate but
 makes the concentration probability smaller. With this choice of $t$, the second  terms in the maximum in \eqref{norm_delta} 
is negligibly small  for $n\geq n^{*}$ where
 $$n^{*}=\dfrac{2\,c^{2}_{\phi}\,l\,\left (\left [K\,\log\left (\dfrac{K\,c_{\phi}}{\omega_{\max}}\right )\right ]\vee 1\right )^{2}\,\log(d)}{M}.$$
   In order to satisfy condition $\lambda \geq 3\left\Vert \Sigma\right\Vert$ in Theorem \ref{thm2} we can choose 
  \begin{equation}\label{lambda_1}
  \lambda=4.25 \left (c^{*}\sigma+\dfrac{2\,b\,\sqrt{s-1}}{l^{\gamma}}\right ) \sqrt{\dfrac{M\,\log(d)}{n}}.
  \end{equation}
  If $\xi_i$ are $N(0,1)$, then we can take $c^{*}=6.5$ (see Lemma 4 in \cite{klopp-variance}).

 With these choices of $\lambda$, we obtain the following theorem. 

  \begin{Theorem}\label{thm3}
   Let Assumptions \ref{approximation_sup} - \ref{noise} hold. Consider regularization parameters $\lambda$ 
satisfying \eqref{lambda_1} and $n\geq n^{*}$. Then, with probability greater than $1-4/d$
  \begin{itemize}
  \item[(i)]\begin{equation*}
    \hskip - 1 cm \left \Vert \hat A-A_0\right \Vert_2^{2}\leq C\,\max\left \{\left (\sigma^{2}+\dfrac{b^{2}\,(s-1)}{l^{2\gamma}}+l\,\left \Vert A_0\right \Vert^{2}_*\right  )\dfrac{M\,s\,\log(d)}{n\,\omega^{2}_{\min}},\dfrac{c_{\phi}\,\left\Vert A_0\right\Vert^{2}_*}{\omega_{\min}}\,\sqrt{\dfrac{\log(d)\,l}{n}}\right \}.
    \end{equation*}
    \item[(ii)] If, in addition $n\geq n^{**}$, then
    \begin{equation*}
        \left \Vert \hat A-A_0\right \Vert_2^{2}\leq C \,\left (\sigma^{2}+\dfrac{b^{2}\,(s-1)}{l^{2\gamma}}\right  )\dfrac{M\,s\,\log(d)}{n\,\omega^{2}_{\min}}.
        \end{equation*}
  \end{itemize} 
  \end{Theorem}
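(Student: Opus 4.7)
The plan is to combine Lemma~\ref{delta} (to control the stochastic term $\|\Sigma\|$ with high probability) with Theorem~\ref{thm2} (to convert control of $\|\Sigma\|$ into a bound on $\|\hat A - A_0\|_2^2$), under the specific choice of regularization parameter $\lambda$ given in \eqref{lambda_1}.

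First, I would apply Lemma~\ref{delta} with $t = \log d$. This yields, on an event $\mathcal{E}_1$ of probability at least $1 - 2/d$, that $\|\Sigma\|$ is bounded by a maximum of two quantities. I would then verify that for $n \geq n^*$, the first term in that maximum dominates: indeed, by the definition of $n^*$, the inequality
\begin{equation*}
\sqrt{\frac{2 M \log d}{n}} \;\geq\; \frac{c_\phi \sqrt{l}\, \cdot 2 \log d \cdot ([K\log(Kc_\phi/\omega_{\max})] \vee 1)}{n}
\end{equation*}
is equivalent to $n \geq n^*$ (up to the factor $2$ absorbed in constants). Consequently, on $\mathcal{E}_1$,
\begin{equation*}
\|\Sigma\| \;\leq\; \Bigl(\sigma c^* + \tfrac{2b\sqrt{s-1}}{l^\gamma}\Bigr)\sqrt{\tfrac{2 M \log d}{n}},
\end{equation*}
which together with the prefactor $4.25$ in \eqref{lambda_1} ensures $\lambda \geq 3\|\Sigma\|$, so the hypothesis of Theorem~\ref{thm2} is met.

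Next, I would plug this $\lambda$ into Theorem~\ref{thm2}. For part~(i), I compute
\begin{equation*}
\lambda^2 \;\leq\; C\Bigl(\sigma^2 + \tfrac{b^2(s-1)}{l^{2\gamma}}\Bigr)\tfrac{M\log d}{n},
\end{equation*}
and substitute into the first argument of the maximum in Theorem~\ref{thm2}(i):
\begin{equation*}
\frac{s}{\omega_{\min}^2}\Bigl(\lambda^2 + \|A_0\|_*^2 \tfrac{c_\phi^2 \, l\, M\, \log d}{n}\Bigr)
\;\leq\; C\Bigl(\sigma^2 + \tfrac{b^2(s-1)}{l^{2\gamma}} + l\,\|A_0\|_*^2\Bigr)\tfrac{M s \log d}{n\,\omega_{\min}^2},
\end{equation*}
absorbing $c_\phi^2$ into the constant $C$. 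The second argument of the maximum in Theorem~\ref{thm2}(i) is already the second argument of the maximum in the desired statement, which gives part~(i). For part~(ii), the assumption $n \geq n^{**}$ triggers Theorem~\ref{thm2}(ii), yielding $\|\hat A - A_0\|_2^2 \leq Cs\lambda^2/\omega_{\min}^2$, and substituting our value of $\lambda^2$ gives exactly the stated bound.

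Finally, a union bound over the event $\mathcal{E}_1$ (from Lemma~\ref{delta}, probability $\geq 1 - 2/d$) and the event on which Theorem~\ref{thm2} holds (probability $\geq 1 - 2/d$) yields a combined probability at least $1 - 4/d$, matching the conclusion. The main technical obstacle is the bookkeeping at the threshold step: one must check carefully that $n \geq n^*$ really does suppress the second branch of the max in Lemma~\ref{delta}, since any slack there would introduce extra $l$-dependent factors into $\lambda$ and degrade the rate. Everything else is substitution into the oracle bounds already proved.
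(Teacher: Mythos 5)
Your proposal is correct and follows essentially the same route as the paper: the paper's (implicit) proof of Theorem~\ref{thm3} is exactly to take $t=\log d$ in Lemma~\ref{delta}, observe that $n\geq n^{*}$ makes the first branch of the maximum dominate so that the choice \eqref{lambda_1} guarantees $\lambda\geq 3\Vert\Sigma\Vert$, then substitute $\lambda^{2}\lesssim(\sigma^{2}+b^{2}(s-1)/l^{2\gamma})M\log(d)/n$ into Theorem~\ref{thm2}(i)--(ii) and combine the two events by a union bound to get probability $1-4/d$. Your bookkeeping at the threshold step (squaring the comparison of the two branches to recover the definition of $n^{*}$) and the absorption of $c_{\phi}^{2}$ into $C$ match what the paper does.
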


  Using $\hat A$ we define the estimator of  $f(t)$ as 
  \begin{equation}\label{est_f}
  \hat f(t)=\left (\hat f_1(t),\dots,\hat f_p(t)\right )^{T}=\hat A\,\phi(t).
  \end{equation}

Theorem \ref{thm3} allows to obtain the following upper bounds on the prediction error of  $\hat f(t)$.

 \begin{Corollary}\label{cor_1}
Suppose that  the assumptions of Theorem \ref{thm3} hold.
With probability greater than $1-4/d$,  one has   
\begin{itemize}

\item [(a)] $\forall t\in\supp( \mu)$
 \begin{equation*}
 \begin{split}
\hskip - 1cm \dfrac{1}{p}\underset{i=1}{\overset{p}{\Sigma}}\vert \hat f_i(t)-f_i(t) \vert&\leq \dfrac{C\,
\left\Vert \phi(t)\right\Vert^{2}_{2}\,\beta}{n}
+\dfrac{2\,b^{2}\,s}{p\,l^{2\gamma}},
 \end{split}
 \end{equation*}
 \item [(b)] If, in addition, Assumption \ref{approximation_L2} holds
 \begin{equation*}
   \begin{split}
    \hskip - 1cm\dfrac{1}{p}\underset{i=1}{\overset{p}{\Sigma}}\Vert \hat f_i-f_i \Vert_{L_2(d\mu)}^{2}&\leq\dfrac{ C\,\beta}{n}
   +\dfrac{2\,b_1^{2}\,s}{p\,l^{(2\gamma+1)}}, \\
 \end{split}
   \end{equation*}
\end{itemize}
   where 
    \begin{equation*}
         \beta = \left\{
    \begin{array}{ll}
                 \left (\sigma^{2}+\dfrac{b^{2}\,(s-1)}{l^{2\gamma}}\right )\,\dfrac{M\,s\,\log(d)}{p\,\omega^{2}_{\min}}, 
   & \hskip -1.5 cm\mbox{if} \quad  n\geq n^{**}\\  \\
   \max\left \{\left (\sigma^{2}+\dfrac{b^{2}\,(s-1)}{l^{2\gamma}}+l\,\left \Vert A_0\right \Vert^{2}_*\right )\,\dfrac{M\,s\,\log(d)}{p\,\omega^{2}_{\min}},\dfrac{c_{\phi}\,\left\Vert A_0\right\Vert^{2}_*\,\sqrt{\log(d)\,l\,n}}{\omega_{\min}\,p}\right \}, 
      &\hskip - 0.5 cm \mbox{if not} .
        \end{array} \right.
        \end{equation*}
 \end{Corollary}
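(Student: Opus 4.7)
The plan is to start from the elementary decomposition coming from the basis expansion \eqref{eq:expansions} and the definition \eqref{est_f}, namely
\begin{equation*}
\hat f_i(t)-f_i(t) \;=\; \bigl((\hat A-A_0)\phi(t)\bigr)_i \;-\; \rho^{(l)}_i(t),
\end{equation*}
so that both errors split cleanly into a \emph{matrix estimation} part controlled by $\|\hat A-A_0\|_2$ and an \emph{approximation} part controlled by the bounds on $\rho^{(l)}$ in Assumptions \ref{approximation_sup}/\ref{approximation_L2}. Throughout I will work on the event of probability $1-4/d$ on which Theorem~\ref{thm3} applies.

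For the $L_2$ bound in part~(b), I would use the orthonormality of $(\phi_j)$ in $L_2(d\mu)$: for each row $i$, $\|((\hat A-A_0)\phi)_i\|_{L_2(d\mu)}^2 = \sum_{j=1}^l (\hat A-A_0)_{ij}^2$, hence summing over $i$ yields exactly the Frobenius norm squared. After applying $(a+b)^2\leq 2a^2+2b^2$, this gives
\begin{equation*}
\tfrac{1}{p}\textstyle\sum_{i=1}^p \|\hat f_i-f_i\|_{L_2(d\mu)}^2 \;\leq\; \tfrac{2}{p}\|\hat A-A_0\|_2^2 \;+\; \tfrac{2}{p}\sum_{i=1}^p \|\rho^{(l)}_i\|_{L_2(d\mu)}^2.
\end{equation*}
Plugging the appropriate case of Theorem~\ref{thm3} into the first term produces $C\beta/n$ with the stated $\beta$. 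For the second term, the key structural observation is the one highlighted in the paper: since the first basis function is constant, the $(p-s+1)$ components $f_i$ that are constant satisfy $\rho_i^{(l)}\equiv 0$, so only $(s-1)$ summands survive. Bounding each by $\|\rho^{(l)}\|_{L_2(d\mu)}^2\leq b_1^2 l^{-(2\gamma+1)}$ via Assumption~\ref{approximation_L2} gives the $2b_1^2 s/(pl^{2\gamma+1})$ residual (using $s-1\leq s$).

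For the pointwise bound in part~(a), the same decomposition and the row-wise Cauchy--Schwarz inequality yield $\|(\hat A-A_0)\phi(t)\|_2^2\leq \|\hat A-A_0\|_2^2\,\|\phi(t)\|_2^2$; taking $\tfrac{1}{p}\sum_i (\cdot)^2$ (I will interpret the statement as squared error, which is what matches the $\|\phi(t)\|_2^2$ and the $\beta/n$ scaling on the right-hand side) and again restricting the approximation-error sum to the $(s-1)$ non-constant components, now bounded via Assumption~\ref{approximation_sup} by $b^2l^{-2\gamma}$, produces the claimed inequality. Substituting the two cases of Theorem~\ref{thm3} ($n\geq n^{**}$ versus otherwise) gives the two branches of $\beta$ in the statement of the corollary.

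The calculation itself is routine; the only mildly delicate point is remembering to exploit the constant-basis-function convention to replace the prefactor $p$ by $s$ in front of $l^{-2\gamma}$ and $l^{-(2\gamma+1)}$. Without this observation one would get a useless $b^2/l^{2\gamma}$ (respectively $b_1^2/l^{2\gamma+1}$) bound that does not reflect the structural sparsity assumption. All other steps are just triangle/Cauchy--Schwarz and a direct appeal to Theorem~\ref{thm3} for the Frobenius-norm control of $\hat A-A_0$.
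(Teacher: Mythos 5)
Your proposal is correct and follows essentially the same route as the paper: decompose $\hat f_i-f_i$ into the matrix-estimation part and the remainder $\rho^{(l)}_i$, use orthonormality of $(\phi_j)$ so that the row-wise $L_2(d\mu)$ norms sum to $\|\hat A-A_0\|_2^2$, exploit that only the $(s-1)$ non-constant components have nonzero remainder, and then invoke Theorem~\ref{thm3}. The paper only writes out part (b) and states that (a) is analogous, and your squared-error reading of (a) (with the row-wise Cauchy--Schwarz step giving the $\|\phi(t)\|_2^2$ factor) is exactly what the stated right-hand side corresponds to.
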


 \begin{proof} 
We shall prove the second statement of the corollary, the first one can be proved in a similar way.
 Let $A^{i}$ denote the $i$-th row of a matrix $A$. We compute
  \begin{equation}\label{corol_f_1}
  \begin{split}
   \left\Vert f_i(t)-\hat A^{i}\phi(t)\right\Vert_{L_2(d\mu)}&\leq \left\Vert f_i(t)-A_0^{i}\phi(t)\right\Vert_{L_2(d\mu)}+\left\Vert \left (A_0^{i}-\hat A^{i}\right )\phi(t)\right\Vert_{L_2(d\mu)}
   \\&=\left\Vert \rho^{(l)}_i(t)\right\Vert_{L_2(d\mu)}+\left\Vert A_0^{i}-\hat A^{i}\right\Vert_2
  \end{split}
  \end{equation}
  where in the last display we used that $(\phi_i(\cdot))_{i=1,\dots,\infty}$ is an orthonormal basis. Using \eqref{corol_f_1} and Assumption \ref{approximation_L2} we derive
  \begin{equation*} 
    \begin{split}
     \underset{i=1}{\overset{p}{\Sigma}}\Vert \hat f_i-f_i \Vert_{L_2(d\mu)}^{2}&\leq 
  \dfrac{2\,b_1^{2}\,s}{l^{(2\gamma+1)}}+2\,\left \Vert \hat A-A_0\right \Vert_2^{2}.
    \end{split}
    \end{equation*}
    Now Theorem \ref{thm3} implies the statement of the corollary.
 \end{proof}


  \section{Orthonormal dictionary}\label{orth}

 As an important particular case, let us consider the orthonormal dictionary. Let $(e_j)_{j}$ be the canonical basis of $\bR^{p}$.
Assume that the vectors $W_i$ are i.i.d copies of a random vector $W$ which has the uniform distribution $\Pi$ on the set 
  $$\mathcal{X} = \left \{e_j,\; 1\leq j\leq p \right \}.$$
   Note that this is an unfavorable   case of very ``sparse observations'',
  that is, each observation provides some information on only one of the coefficients of $f(t)$.

In this case,  $\Omega=\dfrac{1}{p}\mathbb{I}_p$, $\omega_{\max}=\omega_{\min}=\dfrac{1}{p}$ and we obtain
the following values of parameters
   \begin{equation}\label{lambda_orth}
    \begin{split}
    M&=\dfrac{l\vee p}{p},\\
    n^{*}&=2\,K^{2}\,\log^{2}\left (K\,p\right )\,c^{2}_{\phi}\,\log(d)\,(l\wedge p) ,\\
    \lambda&=4.25 \left (C^{*}\sigma+\dfrac{2\,b\,\sqrt{s-1}}{l^{\gamma}}\right ) \sqrt{\dfrac{(l\vee p)\,\log(d)}{p\,n}},\\ 
   n^{**}&=C\,c^{2}_{\phi}\,l\,s\,p\,(l\vee p)\,\log(d).
    \end{split}
    \end{equation}
    
 Plugging these values into Corollary \ref{cor_1},  we derive the following result.

 \begin{Corollary}\label{cor_orth_1}
  Let Assumptions \ref{approximation_sup} and  \ref{noise} hold. 
Consider  regularization parameter $\lambda$ satisfying \eqref{lambda_orth}, and $n\geq n^{*}$. 
Then, with probability greater than $1-4/d$, one has
\begin{itemize}
\item [(a)] $\forall t\in\supp( \mu)$
 \begin{equation}\label{upp_sup}
 \begin{split}
\hskip - 1cm \dfrac{1}{p}\underset{i=1}{\overset{p}{\Sigma}}\vert \hat f_i(t)-f_i(t) \vert&\leq \dfrac{C\,
\left\Vert \phi(t)\right\Vert^{2}_{2}\,\beta}{n}
+\dfrac{2\,b^{2}\,s}{p\,l^{2\gamma}},
 \end{split}
 \end{equation}
 \item [(b)] If, in addition, Assumption \ref{approximation_L2} holds
 \begin{equation}\label{upp_L2}
   \begin{split}
    \hskip - 1cm\dfrac{1}{p}\underset{i=1}{\overset{p}{\Sigma}}\Vert \hat f_i-f_i \Vert_{L_2(d\mu)}^{2}&\leq\dfrac{ C\,\beta}{n}
   +\dfrac{2\,b_1^{2}\,s}{p\,l^{(2\gamma+1)}}, \\
 \end{split}
   \end{equation}
\end{itemize}
   where 
    \begin{equation*}
        \beta = \left\{
    \begin{array}{ll}
                 \left (\sigma^{2}+\dfrac{b^{2}\,(s-1)}{l^{2\gamma}}\right )\,(l\vee p)\,s\,\log(d), 
   & \hskip - 1.5 cm \mbox{if} \quad  n\geq n^{**}\\  \\
   \left (\sigma^{2}+\dfrac{b^{2}\,(s-1)}{l^{2\gamma}}+l\,\left \Vert A_0\right \Vert^{2}_*\right )\,(l\vee p)\,s\,\log(d), 
      & \mbox{\text{if not}} .
        \end{array} \right.
        \end{equation*}

\end{Corollary}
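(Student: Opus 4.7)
The statement is a direct specialization of Corollary \ref{cor_1} to the orthonormal dictionary setup, so my plan is to verify the parameter substitutions in \eqref{lambda_orth} and then plug them into the bounds (a) and (b) of Corollary \ref{cor_1}.

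First I would compute the population covariance. Since $W$ is uniform on $\{e_1,\dots,e_p\}$, we have $\Omega=\mathbb{E}(WW^{T}) = \frac{1}{p}\mathbb{I}_p$, hence $\omega_{\max}=\omega_{\min}=1/p$ and $\mathrm{tr}(\Omega)=1$. Consequently $M=\mathrm{tr}(\Omega)\vee(l\,\omega_{\max}) = 1\vee(l/p) = (l\vee p)/p$. From this I would read off the value of $\lambda$ by substituting $M$ into \eqref{lambda_1}, which gives exactly the $\lambda$ in \eqref{lambda_orth}. Next, for $n^{*}$, I would insert $M=(l\vee p)/p$ and $\omega_{\max}=1/p$ into the definition of $n^{*}$; the factor $l/M = lp/(l\vee p)=l\wedge p$ produces the stated expression (with the constant involving $K$ and $c_\phi$ folded as indicated). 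Finally $n^{**}$: plugging $\omega_{\min}=1/p$ and $Ms=s(l\vee p)/p$ into $n^{**} = C c_\phi^2 l \log(d)\, [(Ms)\vee 1]/\omega_{\min}^2$ and using that whenever $s(l\vee p)\ge p$ the maximum is $Ms$ (and otherwise the formula still holds up to a harmless enlargement of the bound), I recover $n^{**}= C c_\phi^2 l s p (l\vee p)\log(d)$.

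With these substitutions in hand, I would then simply evaluate the quantity $\beta$ that appears in Corollary \ref{cor_1}. For the regime $n\ge n^{**}$, the relevant factor is
\[
\frac{Ms\log(d)}{p\,\omega_{\min}^{2}} \;=\; \frac{(l\vee p)/p\cdot s\,\log(d)}{p}\cdot p^{2} \;=\; (l\vee p)\,s\,\log(d),
\]
so $\beta = \bigl(\sigma^{2}+b^{2}(s-1)/l^{2\gamma}\bigr)(l\vee p)\,s\,\log(d)$, matching the first branch. For the regime $n^{*}\le n < n^{**}$, the same computation converts the first argument of the max in Corollary \ref{cor_1} to $\bigl(\sigma^{2}+b^{2}(s-1)/l^{2\gamma}+l\Vert A_{0}\Vert_{*}^{2}\bigr)(l\vee p)s\log(d)$; the second argument becomes $c_{\phi}\Vert A_{0}\Vert_{*}^{2}\sqrt{\log(d)\,l\,n}$. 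Since $n<n^{**}=Cc_{\phi}^{2}lsp(l\vee p)\log(d)$, one has $\sqrt{\log(d)\,l\,n}\le C c_{\phi}\sqrt{sp(l\vee p)}\,l\log(d)$, which after using $\Vert A_{0}\Vert_{*}^{2}$ as a common factor shows that the first term dominates (up to the generic constant $C$), so the max collapses to the single stated expression.

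Having determined $\beta$, I would simply insert it into the inequalities (a) and (b) of Corollary \ref{cor_1} to obtain \eqref{upp_sup} and \eqref{upp_L2}. The probability bound $1-4/d$ is inherited directly from Corollary \ref{cor_1}. The only place where some care is required is the elimination of the $\max$ in the $n<n^{**}$ case, i.e.\ checking that the second argument is dominated by the first one under the constraint $n<n^{**}$; this is the one algebraic step that is not pure substitution and is the only potential obstacle, though it follows cleanly from the definition of $n^{**}$.
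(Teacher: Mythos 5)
Your proposal is correct and follows essentially the same route as the paper, which proves this corollary simply by substituting $\Omega=\frac{1}{p}\mathbb{I}_p$, $\omega_{\min}=\omega_{\max}=\frac{1}{p}$ and $M=(l\vee p)/p$ into Corollary \ref{cor_1} and the definitions of $\lambda$, $n^{*}$, $n^{**}$. Your extra step showing that for $n<n^{**}$ the second argument of the maximum, $c_{\phi}\Vert A_0\Vert_{*}^{2}\sqrt{\log(d)\,l\,n}$, is dominated (up to the generic constant, since $\sqrt{sp(l\vee p)}\leq s(l\vee p)$) by the term containing $l\Vert A_0\Vert_{*}^{2}(l\vee p)s\log(d)$ is exactly the detail the paper leaves implicit, and it is argued correctly.
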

\textbf{Remarks.} \textit{Optimal choice of parameter $l$:}  The upper bounds given in Corollary \ref{cor_orth_1} indicate the optimal choice of parameter $l$. From  \eqref{lambda_orth} we compute the following values of $l$:
 \begin{equation*}
 \begin{split}
 l_1^{*}=\dfrac{n}{C\,c^{2}_{\phi}\,s\,p^{2}\,\log(d)}\quad\text{if}\quad l\leq p
 \end{split}
 \end{equation*}
 and
 \begin{equation*}
  \begin{split}
  l_2^{*}=\sqrt{\dfrac{n}{C\,c^{2}_{\phi}\,s\,p\,\log(d)}}\quad\text{if}\quad l> p.
  \end{split}
  \end{equation*}
  Let  \begin{equation*}
   \begin{split}
   F_1(l)&=C\,\left (\sigma^{2}+\dfrac{b^{2}\,(s-1)}{l^{2\gamma}}\right )\,\dfrac{p\,s\,\log(d)}{n}+\dfrac{2\,b_1^{2}\,s}{p\,l^{(2\gamma+1)}}, \\
   F_2(l)&=F_1(l)+l\,\left \Vert A_0\right \Vert^{2}_*\dfrac{p\,s\,\log(d)}{n},\\
   F_3(l)&=C\,\left (\sigma^{2}+\dfrac{b^{2}\,(s-1)}{l^{2\gamma}}\right )\,\dfrac{l\,s\,\log(d)}{n}+\dfrac{2\,b_1^{2}\,s}{p\,l^{(2\gamma+1)}}, \\
      F_4(l)&=F_3(l)+l^{2}\,\left \Vert A_0\right \Vert^{2}_*\dfrac{s\,\log(d)}{n}.
   \end{split}
   \end{equation*}
Let $\gamma\geq 1/2$ and consider first the case  $ s\,p^{3}\,\log(d) \gtrsim n \gtrsim s\,p^{2}\,\log(d)$ (the symbol $\lesssim$ means that the inequality holds up to a multiplicative numerical constant). Then, Corollary \ref{cor_orth_1} implies that
 \begin{equation*}
\dfrac{1}{p}\underset{i=1}{\overset{p}{\Sigma}}\Vert \hat f_i-f_i \Vert_{L_2(d\mu)}^{2}\leq \left\{
     \begin{array}{lll}
                  F_1(l), 
    & \mbox{if} \quad  1\leq l\leq l_1^{*} \\
    F_2(l), 
       & \mbox{\text{if}} \quad  l_1^{*}< l\leq p\\
        F_4(l), 
              & \mbox{\text{if}} \quad   l> p.
         \end{array} \right.
 \end{equation*}
On $[1,l_1^{*}],$ $F_1(l)$ achieves its minimum at $l_1^{*}$. Note that $F_1(l_1^{*})\leq F_2(l)$ for any $l\in [l_1^{*},p]$ and $F_1(l_1^{*})\leq F_4(l)$ for any $l>p$.
Then, for $ s\,p^{3}\,\log(d) \gtrsim n \gtrsim s\,p^{2}\,\log(d)$ the optimal value of $l$ minimizing \eqref{upp_L2} is
$$\hat l_1=\left [\dfrac{n}{C\,c^{2}_{\phi}\,s\,p^{2}\,\,\log(d)}\right ].$$
When $   n \gtrsim s\,p^{3}\,\log(d)$, the Corollary \ref{cor_orth_1} implies that
 \begin{equation*}
\dfrac{1}{p}\underset{i=1}{\overset{p}{\Sigma}}\Vert \hat f_i-f_i \Vert_{L_2(d\mu)}^{2}\leq \left\{
     \begin{array}{lll}
                  F_1(l), 
    & \mbox{if} \quad  1\leq l\leq p \\
    F_3(l), 
       & \mbox{\text{if}} \quad  p< l\leq l_2^{*}\\
        F_4(l), 
              & \mbox{\text{if}} \quad   l>l_2^{*}.
         \end{array} \right.
 \end{equation*}
 Let $$l_3^*=\left (\dfrac{C\,n}{\sigma^{2}\,p\,\log(d)}\right )^{\dfrac{1}{2\gamma+2}}.$$
On $[p,l_2^{*}],$ $F_3(l)$ achieves its minimum at $l_2^{*}$ if $  p^{3+2\gamma}\,\log(d)\gtrsim n \gtrsim s\,p^{3}\,\log(d)$ and at $l_3^{*}$ if $   n \gtrsim p^{3+2\gamma}\,\log(d) $. Note that $F_3(l_2^{*})\leq F_1(l)$ for any $l\in [1,p]$ and $F_3(l_2^{*})\leq F_4(l)$ for any $l>l_2^{*}$.
Then, for $  p^{3+2\gamma}\,\log(d)\gtrsim n \gtrsim s\,p^{3}\,\log(d)$ the optimal value of $l$ minimizing \eqref{upp_L2} is
$$\hat l_2=\left [\sqrt{\dfrac{n}{C\,c^{2}_{\phi}\,s\,p\,\,\log(d)}}\right ]$$
and for $   n \gtrsim p^{3+2\gamma}\,\log(d) $ the optimal value of $l$ is
$$\hat l_3=\left (\dfrac{C\,n}{\sigma^{2}\,p\,\log(d)}\right )^{\dfrac{1}{2\gamma+2}}.$$

\textit{Minimax rate of convergence:} For $p=1$ the optimal choice of $l$ in \eqref{upp_L2} is 
$$\hat l= \left (\dfrac{2\,(2\,\gamma+1)\,b^{2}\,n}{\sigma^{2}\,\log(d)}\right )^{\dfrac{1}{2\gamma+2}}.$$
With this choice of $l$, the rate of convergence given by Corollary \ref{cor_orth_1} is $n^{-\dfrac{2\gamma+1}{2\gamma+2}}$. Note that for  $f\in \bW_{\gamma }(0,1)$
we recover the minimax rate of convergence  as  given in e.g. \cite{tsybakov_book}.

\section*{Acknowledgements}

Marianna Pensky was supported in part by National Science Foundation
(NSF), grant  DMS-1106564. The authors want to thank Alexander Tsybakov for 
extremely valuable discussions and suggestions.


\vspace{10mm}

 \appendix

\noindent
{{\LARGE{\bf Appendix}}

\section{Proof of Theorem \ref{thm2}}\label{proof-thm2}

This proof uses ideas developed in the proof of Theorem 3 in \cite{klopp-general}. 
The main difference is that here we have no restriction on the $\sup-$norm of $A_0$. 
This implies several modifications in the proof.

It follows from the definition of the estimator $\hat A$ that 
\begin{equation*}
\begin{split}
\dfrac{1}{n}\Sum \left (Y_i-\left\langle X_i,\hat A\right\rangle\right )^{2}+\lambda \Vert \hat A\Vert_*&\leq \dfrac{1}{n}\Sum \left (Y_i-\left\langle X_i,A_0\right\rangle\right )^{2}+\lambda \Vert A_0\Vert_*
\end{split}
\end{equation*}
which, due to 
 \eqref{model}, implies 
\begin{equation}\label{new_1}
\begin{split}
\dfrac{1}{n}\Sum \left (\left\langle X_i,A_0-\hat A\right\rangle+W^{T}_i\rho^{(l)}(t_i)+\xi_i\right )^{2}+\lambda \Vert \hat A\Vert_*&\leq \\&\hskip -2 cm\dfrac{1}{n}\Sum \left (W^{T}_i\rho^{(l)}(t_i)+\xi_i\right )^{2}+\lambda \Vert A_0\Vert_*.
\end{split}\end{equation}
Set $H=A_0-\hat A$ and $\Sigma=\dfrac{1}{n}\Sum \left (W^{T}_i\rho^{(l)}(t_i)+\xi_i\right )X_i$. Then, we can write \eqref{new_1} in the following way
\begin{equation*}
\dfrac{1}{n}\Sum \left\langle X_i,H\right\rangle^{2}+2\left\langle \Sigma,H\right\rangle\\+\lambda \Vert \hat A\Vert_*\leq \lambda \Vert A_0\Vert_*.
\end{equation*}
By duality between the nuclear  and the operator norms, we obtain
\begin{equation}\label{1}
\dfrac{1}{n}\Sum \left\langle X_i,H\right\rangle^{2}+ \lambda\Vert \hat A\Vert_*\leq 2\left\Vert \Sigma\right\Vert \Vert H\Vert_*+\lambda \Vert A_0\Vert_*.
\end{equation}

Let $P_S$ denote the projector on the linear subspace $S$ and let $S^\bot$ be the orthogonal complement of $S$.
Let $u_j(A)$ and $v_j(A)$ denote respectively the \textit{left} and the \textit{right} orthonormal \textit{singular vectors} of $A$, $S_1(A)$ is the linear span of $\{u_j(A)\}$, $S_2(A)$ is the linear span of $\{v_j(A)\}$. For $A,B\in \mathbb{R}^{p\times l}$ we set
 $\mathbf P_A^{\bot}(B)=P_{S_1^{\bot}(A)}BP_{S_2^{\bot}(A)}$ and $\mathbf P_A(B)=B- \mathbf P_A^{\bot}(B)$.
 
 By definition, for any matrix $B$, the singular vectors of $\mathbf P_{A_0}^{\bot}(B)$  are orthogonal to the space spanned by the singular vectors of $A_0$. This implies that $\left\Vert A_0+\mathbf P_{A_0}^{\bot}(H) \right\Vert_1=\left \Vert A_0 \right \Vert_*+\left\Vert \mathbf P_{A_0}^{\bot}(H) \right\Vert_*$. Then we compute
 \begin{equation}\label{ineq}
 \begin{split}
 \left\Vert \hat A\right\Vert_*&=\Big\Vert A_0 +H\Big\Vert_*\\
 &=\left\Vert A_0 +\mathbf P_{A_0}^{\bot}(H)+\mathbf P_{A_0}(H)\right\Vert_*\\
 &\geq \left\Vert A_0 +\mathbf P_{A_0}^{\bot}(H)\right\Vert_*-\left\Vert\mathbf P_{A_0}(H)\right\Vert_*\\
 &=\left \Vert A_0 \right \Vert_*+\left\Vert \mathbf P_{A_0}^{\bot}(H)\right\Vert_*-\left\Vert\mathbf P_{A_0}(H)\right\Vert_*.
 \end{split}
 \end{equation}
 From \eqref{ineq} we obtain
\begin{equation}\label{un2}
\left \Vert A_0 \right \Vert_*-\left\Vert \hat A\right\Vert_*\leq \left\Vert\mathbf P_{A_0}(H)\right\Vert_*-\left\Vert \mathbf P_{A_0}^{\bot}(H)\right\Vert_*.
\end{equation}
From \eqref{1}, using \eqref{un2} and $\lambda\geq 3\left \Vert \Sigma\right\Vert$ we obtain
\begin{equation}\label{2}
\begin{split}
\dfrac{1}{n}\Sum \left\langle X_i,H\right\rangle^{2}&\leq 2\left \Vert \Sigma\right \Vert \left \Vert \mathbf P_{A_0}\left (H\right )\right \Vert_*+\lambda\left \Vert\mathbf P_{A_0}\left ( H\right )\right \Vert_*\\&\leq \dfrac{5}{3}\lambda  \left \Vert \mathbf P_{A_0}\left (H\right )\right \Vert_*.
\end{split}
\end{equation}
 Since $\mathbf P_A(B)=P_{S_1^{\bot}(A)}BP_{S_2(A)}+ P_{S_1(A)}B$
and $\rank\,(P_{S_i(A)}B)\leq \rank \,(A)$ we derive  that $\rank\,(\mathbf P_A(B))\leq 2\,\rank\,(A)$. 
From \eqref{2} we compute
\begin{equation}\label{3}
\dfrac{1}{n}\Sum \left\langle X_i,H\right\rangle^{2}\leq \dfrac{5}{3}\,\lambda\,\sqrt{2\,R}\left\Vert H \right\Vert_2
\end{equation}
where we set $R=\rank\,(A_0)$.

For $0<r\leq m=\min\,(p,l)$ we consider the following constraint set
\begin{equation}\label{constrain}
\mathcal{C}(r)=\left \{\left\Vert A\right\Vert_2 \leq 1,\,  \left\Vert A\right\Vert_{L_2(\Pi\otimes\mu)}^{2}\geq c_{\phi}\,\sqrt{\dfrac{64\,\log(d)\,l}{\log\left (6/5\right )\,n}}, \left\Vert A\right\Vert_{*}\leq \sqrt{r} \left\Vert A\right\Vert_{2}\right \}
\end{equation}
where $\Vert A\Vert _{L_2(\Pi\otimes\mu)}^{2}=\bE\left(\langle X,A\rangle^{2}\right)$. Note that the condition $\left\Vert A\right\Vert_{*}\leq \sqrt{r} \left\Vert A\right\Vert_{2}$ is satisfied if $\rank(A)\leq r$.

The following lemma shows that for matrices $A\in \mathcal{C}(r)$ we have some approximative restricted isometry. Its proof is given in Appendix \ref{proof-thm1}.
\begin{lemma}\label{thm1}
For all $A\in \mathcal{C}(r)$
$$\dfrac{1}{n}\Sum \left\langle X_i,A\right\rangle^{2}\geq \dfrac{1}{2}\Vert A\Vert _{L_2(\Pi\otimes\mu)}^{2}-\dfrac{44\,c^{2}_{\phi}\,l\,r}{\omega_{\min}}\left (\bE\left ( \left\Vert \Sigma_R\right\Vert\right )\right )^{2} $$
with probability at least $1-\dfrac{2}{d}$.
\end{lemma}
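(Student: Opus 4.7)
The plan is to prove a uniform deviation bound of the form
\[
\sup_{A\in\mathcal{C}(r)}\left|\,\|A\|_{L_2(\Pi\otimes\mu)}^{2}-\frac{1}{n}\sum_{i=1}^{n}\langle X_i,A\rangle^{2}\right|
\;\leq\; \tfrac12\|A\|_{L_2(\Pi\otimes\mu)}^{2}+\text{(residual)},
\]
and then rearrange. The residual term should scale like $c_\phi^{2}lr\,\bigl(\bE\|\Sigma_R\|\bigr)^{2}/\omega_{\min}$, matching the statement. The route is the by-now classical combination of symmetrization, the Ledoux--Talagrand contraction principle, trace/operator duality, and Talagrand's concentration inequality, together with a peeling step to handle the fact that $\mathcal{C}(r)$ does not constrain $\|A\|_{L_2(\Pi\otimes\mu)}$ from above.

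\textbf{Step 1 (uniform boundedness).} For $A\in\mathcal{C}(r)$, use $\|W_i\|_2\le1$, $\|\phi(t)\|_2^{2}\le c_\phi^{2}l$ and $\|A\|_2\le 1$ to bound
\[
|\langle X_i,A\rangle|=|W_i^{T}A\phi(t_i)|\le\|A\|_{\mathrm{op}}\|\phi(t_i)\|_2\le c_\phi\sqrt{l}.
\]
So the random variables $\langle X_i,A\rangle^{2}$ are uniformly bounded by $c_\phi^{2}l$, which gives the envelope needed for the concentration step.

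\textbf{Step 2 (symmetrization + contraction + duality).} By the standard symmetrization inequality,
\[
\bE\sup_{A\in\mathcal{C}(r)}\Bigl|\tfrac{1}{n}\textstyle\sum_i\langle X_i,A\rangle^{2}-\|A\|_{L_2(\Pi\otimes\mu)}^{2}\Bigr|
\le 2\bE\sup_{A\in\mathcal{C}(r)}\Bigl|\tfrac{1}{n}\sum_i\varepsilon_i\langle X_i,A\rangle^{2}\Bigr|.
\]
Since $x\mapsto x^{2}$ is $2c_\phi\sqrt{l}$-Lipschitz on $[-c_\phi\sqrt{l},c_\phi\sqrt{l}]$, the Ledoux--Talagrand contraction principle bounds the right-hand side by
\(
8c_\phi\sqrt{l}\,\bE\sup_{A\in\mathcal{C}(r)}|\langle\Sigma_R,A\rangle|.
\)
Then, by trace/operator-norm duality and the defining constraint $\|A\|_*\le\sqrt{r}\|A\|_2\le\sqrt{r}$,
\[
\bE\sup_{A\in\mathcal{C}(r)}|\langle\Sigma_R,A\rangle|\le\sqrt{r}\,\bE\|\Sigma_R\|.
\]
Combining, $\bE\sup_{A\in\mathcal{C}(r)}|F(A)|\le 16\,c_\phi\sqrt{lr}\,\bE\|\Sigma_R\|$, where $F(A)$ denotes the empirical-vs-population gap.

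\textbf{Step 3 (concentration).} Apply Talagrand's (or Bousquet's) inequality to the empirical process $F(A)$ over $\mathcal{C}(r)$, using the envelope $c_\phi^{2}l$ from Step 1 and the weak variance $\sup_{A\in\mathcal{C}(r)}\bE\langle X,A\rangle^{4}\le c_\phi^{2}l\,\|A\|_{L_2(\Pi\otimes\mu)}^{2}$. The outcome is that with probability at least $1-2/d$,
\[
\sup_{A\in\mathcal{C}(r)}|F(A)|\;\lesssim\; c_\phi\sqrt{lr}\,\bE\|\Sigma_R\|\;+\;\sqrt{\tfrac{c_\phi^{2}l\log d}{n}}\sup_{A\in\mathcal{C}(r)}\|A\|_{L_2(\Pi\otimes\mu)}\;+\;\tfrac{c_\phi^{2}l\log d}{n}.
\]
The definition of $\mathcal{C}(r)$ puts a \emph{lower} bound $\|A\|_{L_2(\Pi\otimes\mu)}^{2}\ge c_\phi\sqrt{64\log(d)\,l/(\log(6/5)n)}$, which is precisely the threshold making the last two (log-$d$-driven) terms absorbable into $\tfrac14\|A\|_{L_2(\Pi\otimes\mu)}^{2}$.

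\textbf{Step 4 (peeling on $\|A\|_{L_2}$, then rearrangement).} Because $\mathcal{C}(r)$ is not homogeneous (it sits inside $\{\|A\|_2\le1\}$), run a geometric peeling argument at scales $\|A\|_{L_2(\Pi\otimes\mu)}\in[(6/5)^{k},(6/5)^{k+1}]$; the ratio $6/5$ explains the constant hidden in the definition of $\mathcal{C}(r)$. A union bound over the finitely many relevant scales (truncated because $\|A\|_{L_2(\Pi\otimes\mu)}^{2}\le\mathrm{tr}(\Omega)\le 1$ on the set) keeps the probability at $1-2/d$. On each shell, one uses the AM-GM inequality
\(
c_\phi\sqrt{lr}\,\bE\|\Sigma_R\|\le \tfrac{\omega_{\min}}{4}\|A\|_{L_2(\Pi\otimes\mu)}^{2}\cdot\tfrac{1}{\omega_{\min}\|A\|_{L_2}^{2}}\,c_\phi^{2}lr(\bE\|\Sigma_R\|)^{2}
\)
and the fact $\|A\|_{L_2(\Pi\otimes\mu)}^{2}\ge\omega_{\min}\|A\|_2^{2}$ from \eqref{RI_exp_orth} to absorb a further $\tfrac14\|A\|_{L_2}^{2}$ on the left. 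Rearranging gives the claimed $\tfrac12\|A\|_{L_2(\Pi\otimes\mu)}^{2}$ on one side and the residual $44\,c_\phi^{2}lr\,(\bE\|\Sigma_R\|)^{2}/\omega_{\min}$ on the other.

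\textbf{Main obstacle.} The delicate part is Step 4: one has to track constants in Talagrand's bound and in the peeling so that the ``$\tfrac12$'' and the explicit ``$44$'' come out cleanly, and must verify that the constraint $\|A\|_{L_2(\Pi\otimes\mu)}^{2}\ge c_\phi\sqrt{64\log(d)\,l/(\log(6/5)n)}$ is exactly what is needed to swallow the $\sqrt{\log d/n}$ and $\log d/n$ fluctuation terms from Talagrand's inequality at each shell. Once those constants are set, the rest is bookkeeping.
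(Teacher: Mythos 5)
Your overall architecture --- symmetrization, contraction, nuclear/operator-norm duality, Massart/Talagrand-type concentration, and peeling at ratio $6/5$ over shells of $\Vert A\Vert_{L_2(\Pi\otimes\mu)}^2$ --- is exactly the paper's route (the paper isolates the per-shell bound as Lemma \ref{l1}, uses Massart's inequality, and sums an infinite geometric series rather than truncating the shells; these are cosmetic differences). However, one step does not go through as written. In Step 2 you apply duality with $\Vert A\Vert_*\le\sqrt r\,\Vert A\Vert_2\le\sqrt r$, which yields the additive bound $16\,c_\phi\sqrt{lr}\,\bE\Vert\Sigma_R\Vert$; this term is linear in $\bE\Vert\Sigma_R\Vert$ and carries no factor of $\Vert A\Vert_{L_2(\Pi\otimes\mu)}$, so there is nothing left to trade against $\Vert A\Vert_{L_2(\Pi\otimes\mu)}^2$ afterwards. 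The displayed ``AM-GM'' in Step 4 is then not a valid inequality: its right-hand side simplifies to $\tfrac14\,c_\phi^2\,l\,r\,\bigl(\bE\Vert\Sigma_R\Vert\bigr)^2$, which does not dominate $c_\phi\sqrt{lr}\,\bE\Vert\Sigma_R\Vert$ unless that quantity is at least $4$. As stated, your chain therefore ends with a residual that is linear in $\bE\Vert\Sigma_R\Vert$ and never legitimately produces the claimed $44\,c_\phi^2\,l\,r\,\bigl(\bE\Vert\Sigma_R\Vert\bigr)^2/\omega_{\min}$.

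The fix --- and what the paper actually does --- is to perform the duality step on each peeling shell $\mathcal{C}(r,T)$ with the shell-dependent bound $\Vert A\Vert_*\le\sqrt r\,\Vert A\Vert_2\le\sqrt{rT/\omega_{\min}}$, which follows from \eqref{RI_exp_orth} and $\Vert A\Vert_{L_2(\Pi\otimes\mu)}^2\le T$. The expectation bound then reads $\bE(Z_T)\le 8\,c_\phi\sqrt{lrT/\omega_{\min}}\,\bE\Vert\Sigma_R\Vert$, i.e.\ it is proportional to $\sqrt T$, and the elementary inequality $a\sqrt T\le\epsilon T+a^2/(4\epsilon)$ with $\epsilon=\tfrac89\cdot\tfrac5{12}$ converts it into $\tfrac89\cdot\tfrac5{12}\,T$ plus $44\,c_\phi^2\,l\,r\,\bigl(\bE\Vert\Sigma_R\Vert\bigr)^2/\omega_{\min}$; adding the Massart fluctuation term $\tfrac19\cdot\tfrac5{12}\,T$ (this is exactly where the lower bound $\Vert A\Vert_{L_2(\Pi\otimes\mu)}^2\ge\nu$ in the definition of $\mathcal{C}(r)$ is needed) gives the per-shell tail bound of Lemma \ref{l1}, after which the peeling and union bound finish as you describe. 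So the idea is right, but Steps 2 and 4 must be rewired to retain the $\sqrt T$ factor; otherwise the claimed quadratic-in-$\bE\Vert\Sigma_R\Vert$ residual cannot be reached.
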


 We need the following auxiliary lemma which is proved in Appendix \ref{pl2}.
\begin{lemma}\label{l2} If $\lambda_1>3\left\Vert \Sigma\right\Vert$ 
$$ \left\Vert \mathbf P_{A_0}^{\bot}(H) \right\Vert_*\leq 5\left\Vert \mathbf P_{A_0}(H) \right\Vert_*
.$$
\end{lemma}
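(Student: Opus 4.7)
The plan is to derive the claimed cone condition on $H = A_0 - \hat A$ by starting from inequality \eqref{1}, which is the basic inequality coming from the definition of $\hat A$:
$$\frac{1}{n}\sum_{i=1}^n \left\langle X_i, H\right\rangle^2 + \lambda \|\hat A\|_* \leq 2\|\Sigma\|\,\|H\|_* + \lambda \|A_0\|_*.$$

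First I would split $H = \mathbf{P}_{A_0}(H) + \mathbf{P}_{A_0}^\perp(H)$ and invoke the triangle inequality for the nuclear norm to get $\|H\|_* \leq \|\mathbf{P}_{A_0}(H)\|_* + \|\mathbf{P}_{A_0}^\perp(H)\|_*$. Next, I would substitute the lower bound on $\|\hat A\|_*$ already established in \eqref{ineq}, namely
$$\|\hat A\|_* \geq \|A_0\|_* + \|\mathbf{P}_{A_0}^\perp(H)\|_* - \|\mathbf{P}_{A_0}(H)\|_*.$$
After canceling the common $\lambda\|A_0\|_*$ on both sides and discarding the nonnegative empirical quadratic term, I obtain
$$\lambda\bigl(\|\mathbf{P}_{A_0}^\perp(H)\|_* - \|\mathbf{P}_{A_0}(H)\|_*\bigr) \leq 2\|\Sigma\|\bigl(\|\mathbf{P}_{A_0}(H)\|_* + \|\mathbf{P}_{A_0}^\perp(H)\|_*\bigr).$$

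Finally I would apply the hypothesis $\lambda \geq 3\|\Sigma\|$, i.e. $\|\Sigma\| \leq \lambda/3$, so that $2\|\Sigma\| \leq 2\lambda/3$. Dividing through by $\lambda$ gives
$$\|\mathbf{P}_{A_0}^\perp(H)\|_* - \|\mathbf{P}_{A_0}(H)\|_* \leq \tfrac{2}{3}\bigl(\|\mathbf{P}_{A_0}(H)\|_* + \|\mathbf{P}_{A_0}^\perp(H)\|_*\bigr),$$
and rearranging yields $\tfrac{1}{3}\|\mathbf{P}_{A_0}^\perp(H)\|_* \leq \tfrac{5}{3}\|\mathbf{P}_{A_0}(H)\|_*$, which is precisely the claim.

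There is no real obstacle here: the argument is a standard ``cone condition'' derivation familiar from nuclear-norm regularization analysis, and every ingredient (inequality \eqref{1}, the decomposition \eqref{ineq}) is already spelled out in the preceding proof of Theorem~\ref{thm2}. The only subtlety is making sure the factor of $2$ on $\|\Sigma\|$ combines with the assumption $\lambda \geq 3\|\Sigma\|$ to yield exactly the constant $5$; tracking this constant carefully is the sole computational care required.
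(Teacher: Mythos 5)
Your proposal is correct and follows essentially the same route as the paper: both start from inequality \eqref{1}, discard the nonnegative empirical quadratic term, use the lower bound \eqref{ineq}/\eqref{un2} on $\Vert \hat A\Vert_*$ together with the triangle inequality $\Vert H\Vert_*\leq \Vert \mathbf P_{A_0}(H)\Vert_*+\Vert \mathbf P_{A_0}^{\bot}(H)\Vert_*$, and then apply $2\Vert\Sigma\Vert\leq \tfrac{2}{3}\lambda$ to obtain the constant $5$. No gap to report.
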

 Lemma \ref{l2} implies that 
  \begin{equation}\label{new_2}
  \begin{split}
  \left\Vert H\right\Vert_{*}&\leq 6\left\Vert \mathbf P_{A_0}(H) \right\Vert_*
  \\&\leq \sqrt{72\,R} \left\Vert H\right\Vert_{2}.
  \end{split}
  \end{equation}


 If $\left\Vert H\right\Vert_{L_2(\Pi\otimes\mu)}^{2}\geq c_{\phi}\,\left \Vert H\right\Vert^{2}_2\,\sqrt{\dfrac{64\,\log(d)\,l}{\log\left (6/5\right )\,n}}$,  \eqref{new_2} implies that $ \dfrac{H}{\left\Vert H \right\Vert_{2}}\in \mathcal{C}\left (72\,R\right )$ 
  and we can apply Lemma \ref{thm1}. From Lemma \ref{thm1} and \eqref{3} we obtain that 
with probability at least $1-\dfrac{2}{d}$ one has
\begin{equation} \label{trace-3}
\begin{split}
\dfrac{1}{2}\Vert H\Vert _{L_2(\Pi\otimes\mu)}^{2}&\leq  \dfrac{5}{3}\lambda\sqrt{2\,R}\left\Vert H \right\Vert_2+\dfrac{3168\,c^{2}_{\phi}\,l\,R}{\omega_{\min}}\left\Vert H\right\Vert^{2}_2\,\left (\bE\left ( \left\Vert \Sigma_R\right\Vert\right )\right )^{2}.
\end{split}
\end{equation}
The following Lemma, proved in Section \ref{proof_E}, gives a suitable bound on $\bE \left\Vert \Sigma_R\right\Vert$:
\begin 
   {lemma}\label{Edelta}
   Let $(\epsilon_i)_{i=1}^{n}$ be an i.i.d. Rademacher sequence. Suppose that Assumption  \ref{ass_om1} holds. Then,
    $$ \bE \left\Vert \Sigma_R\right\Vert\leq 4.6 \sqrt{\dfrac{M\,\log(d)}{n}}$$ 
    where $d=p+l$ and $M=\mathrm{tr}(\Omega)\vee \left (l\omega_{\max}\right )$.
   \end{lemma}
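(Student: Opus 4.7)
The plan is to apply the matrix Bernstein inequality to $\Sigma_R=\frac{1}{n}\sum_{i=1}^n Y_i$ with $Y_i:=\epsilon_i X_i\in\mathbb{R}^{p\times l}$; these are i.i.d.\ and mean-zero because the Rademacher factor kills any conditional mean. To invoke matrix Bernstein I need a uniform bound $\|Y_i\|\le B$ almost surely and the matrix variance parameter $\sigma^2=\max\bigl(\|\sum_i\mathbb{E}[Y_iY_i^T]\|,\,\|\sum_i\mathbb{E}[Y_i^TY_i]\|\bigr)$, after which the standard statement yields $\mathbb{E}\|\sum_i Y_i\|\le\sqrt{2\sigma^2\log d}+\tfrac{1}{3}B\log d$ with $d=p+l$.

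First the uniform bound: from $\|W_i\|_2\le 1$ a.s.\ together with the basis growth condition \eqref{eq:basis_assum},
$$\|Y_i\|=\|X_i\|\le\|W_i\|_2\,\|\phi(t_i)\|_2\le c_\phi\sqrt{l}=:B.$$
Next the variance. Using independence of $W_i$ and $t_i$ together with the orthonormality of $(\phi_j)$ in $L_2(d\mu)$, which gives $\mathbb{E}\|\phi(t)\|_2^2=l$ and $\mathbb{E}[\phi(t)\phi^T(t)]=\mathbb{I}_l$, I would compute
$$\mathbb{E}[Y_iY_i^T]=\mathbb{E}\bigl[\|\phi(t_i)\|_2^2\,W_iW_i^T\bigr]=l\,\Omega,\qquad \mathbb{E}[Y_i^TY_i]=\mathbb{E}\bigl[\|W_i\|_2^2\,\phi(t_i)\phi^T(t_i)\bigr]=\mathrm{tr}(\Omega)\,\mathbb{I}_l,$$
whence $\sigma^2=n\max\bigl(l\,\omega_{\max},\mathrm{tr}(\Omega)\bigr)=nM$. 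Substituting into matrix Bernstein and dividing by $n$ delivers
$$\mathbb{E}\|\Sigma_R\|\le\sqrt{\tfrac{2M\log d}{n}}+\tfrac{c_\phi\sqrt{l}\log d}{3n}.$$

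The hard part will be matching the clean constant $4.6$ in the stated bound and absorbing the lower-order sub-exponential remainder $c_\phi\sqrt{l}\log d/(3n)$. This remainder is dominated by the main $\sqrt{M\log d/n}$ term whenever $n\gtrsim c_\phi^2 l\log d/M$, which is precisely the threshold $n^*$ appearing in the paper; in that regime the two terms collapse into a single $C\sqrt{M\log d/n}$ with $C\le 4.6$ after careful bookkeeping of the matrix-Bernstein constants. An alternative that sidesteps the sub-exponential tail altogether is to condition on $(W_i,t_i)$, apply the non-commutative Khintchine (Lust-Piquard) inequality to the Rademacher series, and then take expectation via Jensen together with the variance identities above; this route produces a pure $\sqrt{M\log d/n}$ bound directly, with the numerical constant descending from the Khintchine constant.
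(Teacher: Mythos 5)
Your core argument coincides with the paper's: the same decomposition $Z_i=\epsilon_i X_i$, the same uniform bound $\|Z_i\|\le c_\phi\sqrt{l}$ (the paper writes $U=\sqrt l$, apparently dropping $c_\phi$), and the same variance computations $\bE[Z_iZ_i^T]=l\,\Omega$, $\bE[Z_i^TZ_i]=\mathrm{tr}(\Omega)\,\mathbb{I}_l$, hence $\sigma_Z^2=M$. Where you differ is the last step: you invoke the expectation form of matrix Bernstein, $\bE\|\sum_i Z_i\|\le\sqrt{2n M\log d}+\tfrac13 c_\phi\sqrt l\,\log d$, whereas the paper only uses the tail-bound version (its Proposition 1) and then extracts the expectation by H\"older's inequality applied to the $2\log d$-th moment, integrating the sub-Gaussian and sub-exponential tail regimes separately with a Gamma-function estimate. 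Your route is shorter and the constant bookkeeping you worry about is actually unproblematic: under $n\ge n^*$ one has $c_\phi\sqrt l\,\log d/(3n)\le \tfrac{1}{3\sqrt2}\sqrt{M\log d/n}$, so the bound is $\bigl(\sqrt2+\tfrac{1}{3\sqrt2}\bigr)\sqrt{M\log d/n}<4.6\sqrt{M\log d/n}$, in fact better than the paper's $2\sqrt{2e}\approx4.66$ (rounded to $4.6$). Note that both proofs genuinely need the sample-size condition: the paper's own argument uses ``$n\ge n^*$ implies $\nu_1\log d\le\nu_2^2$'' even though the condition is absent from the lemma statement, so your explicit flagging of the threshold $n\gtrsim c_\phi^2 l\log d/M$ is consistent with (indeed more honest than) the original. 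One caveat: your alternative via noncommutative Khintchine does not sidestep the issue as cleanly as you claim, because after conditioning you must bound $\bE\bigl\|\sum_i X_iX_i^T\bigr\|^{1/2}$ (and its transpose analogue), and replacing the random matrix variance by its expectation $n\,l\,\omega_{\max}$ again costs a deviation term that is only lower order under the same kind of condition on $n$; so that route is not ``pure'' $\sqrt{M\log d/n}$ without an analogous absorption step.
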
 
Using Lemma \ref{Edelta}, \eqref{RI_exp_orth} and \eqref{trace-3} we obtain
\begin{equation} \label{trace-4}
\begin{split}
\omega_{\min}\Vert H\Vert _2^{2}&\leq  \dfrac{10}{3}\lambda\sqrt{2\,R}\left\Vert H \right\Vert_2+\dfrac{C\,c^{2}_{\phi}\,l\,R\,M\,\log(d)}{\omega_{\min}\,n}\left\Vert H\right\Vert^{2}_2.
\end{split}
\end{equation}
On the other hand, equation \eqref{1} and the triangle inequality imply that
\begin{equation*} 
 \lambda\Vert \hat A\Vert_*\leq 2\left\Vert \Sigma\right\Vert \Vert \hat A\Vert_*+2\left\Vert \Sigma\right\Vert \Vert  
A_0\Vert_*+\lambda \Vert A_0\Vert_*
\end{equation*}
and $\lambda \geq  3\,\left\Vert \Sigma\right\Vert$ gets
\begin{equation} \label{bound_hat_zero}
\Vert \hat A\Vert_2\leq \Vert \hat A\Vert_*\leq 5\Vert A_0\Vert_*.
\end{equation}

Putting \eqref{bound_hat_zero} into \eqref{trace-4} and using $\rank(A_0)\leq s$ we compute
\begin{equation*} 
\begin{split}
\left\Vert H\right\Vert^{2}_2\leq \dfrac{C\,s}{\omega^{2}_{\min}}\left (
 \lambda^{2} +\dfrac{c^{2}_{\phi}\,l\,M\,\log(d)\,\left\Vert A_{0}\right\Vert^{2}_*}{n}\right )
\end{split}
\end{equation*}
which implies the statement (i) of Theorem \ref{thm2} in the case when $\left\Vert H\right\Vert_{L_2(\Pi\otimes\mu)}^{2}\geq c_{\phi}\,\left \Vert H\right\Vert^{2}_2\,\sqrt{\dfrac{64\,\log(d)\,l}{\log\left (6/5\right )\,n}}$.

If $\left\Vert H\right\Vert_{L_2(\Pi\otimes\mu)}^{2}\leq c_{\phi}\,\left \Vert H\right\Vert^{2}_2\,\sqrt{\dfrac{64\,\log(d)\,l}{\log\left (6/5\right )\,n}}$, using \eqref{RI_exp_orth}, 
we derive
\begin{equation} \label{o1}
\omega_{\min}\,\left\Vert H\right\Vert^{2}_2\leq c_{\phi}\left \Vert H\right\Vert^{2}_2\,\sqrt{\dfrac{64\,\log(d)\,l}{\log\left (6/5\right )\,n}}.
\end{equation}
Then \eqref{bound_hat_zero} implies
 \begin{equation*}
 \begin{split}
 \left\Vert H\right\Vert^{2}_2&<  \dfrac{C\,c_{\phi}\,\left\Vert A_0\right\Vert^{2}_*}{\omega_{\min}}\,\sqrt{\dfrac{\log(d)\,l}{n}}.
 \end{split}
 \end{equation*}
This completes the proof of part (i) of Theorem \ref{thm2}.

If, in addition $n> 2\,\dfrac{C\,c^{2}_{\phi}\,l\,s\,M\,\log(d)}{\omega^{2}_{\min}}$, from \eqref{trace-4} we obtain
\begin{equation*} 
\begin{split}
\omega_{\min}\Vert H\Vert _2^{2}&\leq  \dfrac{10}{3}\lambda\sqrt{2\,R}\left\Vert H \right\Vert_2+\dfrac{\omega_{\min}}{2}\left\Vert H\right\Vert^{2}_2
\end{split}
\end{equation*}
and
\begin{equation*} 
\begin{split}
\left\Vert H\right\Vert^{2}_2\leq \dfrac{C\,s\,\lambda^{2}}{\omega^{2}_{\min}}.
\end{split}
\end{equation*}
On the other hand, for $n>n^{**}$ \eqref{o1} does not hold.
This completes the proof of Theorem \ref{thm2}.


\section{Proof of Lemma \ref{thm1}}\label{proof-thm1}

Set $\mathcal{E}=\dfrac{44\,c^{2}_{\phi}\,l\, r\left (\bE\left ( \left\Vert \Sigma_R\right\Vert\right )\right )^{2}}{\omega_{\min}}$. 
We will show that the probability of the following bad event is small
\begin{equation*}
\mathcal{B}=\left \{\exists\,A\in \mathcal{C}(r)\,\text{such that}\,\left \vert\dfrac{1}{n} \Sum \left\langle X_i,A\right\rangle^{2} -\Vert A\Vert _{L_2(\Pi\otimes\mu)}^{2}\right \vert> \dfrac{1}{2}\Vert A\Vert _{L_2(\Pi\otimes\mu)}^{2}+ \mathcal{E}\right \}.
\end{equation*}
Note that $\mathcal{B}$ contains the complement of the event that we are interested in.

In order to estimate the probability of $\mathcal{B}$ we use a standard peeling argument. Let $\nu=c_{\phi}\sqrt{\dfrac{64\,\log(d)\,l}{\log\left (6/5\right )\,n}}$ and $\alpha=\dfrac{6}{5}$. For $k\in\mathbb N$ set $$S_k=\left \{A\in \mathcal{C}(r)\,:\,\alpha^{k-1}\nu \leq \Vert A\Vert _{L_2(\Pi\otimes\mu)}^{2}\leq \alpha^{k}\nu\right \}.$$ 
If the event $\mathcal{B}$ holds for some matrix $A\in \mathcal{C}(r)$, then $A$ belongs to some $S_k$ and 
\begin{equation}\label{Bl}
\begin{split}
\left \vert\dfrac{1}{n} \Sum \left\langle X_i,A\right\rangle^{2}-\Vert A\Vert _{L_2(\Pi\otimes\mu)}^{2}\right \vert&> \dfrac{1}{2}\Vert A\Vert _{L_2(\Pi\otimes\mu)}^{2}+ \mathcal{E}\\&> \dfrac{1}{2}\alpha^{k-1}\nu+ \mathcal{E}\\&
= \dfrac{5}{12}\alpha^{k}\nu+ \mathcal{E}.
\end{split}
\end{equation}
For each $T>\nu$ consider the following set of matrices
$$\mathcal{C}(r,T)=\left \{A\in\mathcal{C}(r) \,:\,  \left\Vert A\right\Vert_{L_2(\Pi\otimes\mu)}^{2}\leq T \right \}
$$ 
and the following event $$\mathcal{B}_k=\left \{\exists\,A\in \mathcal{C}(r,\alpha^{k}\nu)\,:\,\left \vert\dfrac{1}{n} \Sum \left\langle X_i,A\right\rangle^{2}-\Vert A\Vert _{L_2(\Pi\otimes\mu)}^{2}\right \vert> \dfrac{5}{12}\alpha^{k}\nu+ \mathcal{E}\right \}.$$
Note that $A\in S_k$ implies that $A\in \mathcal{C}(r,\alpha^{k}\nu)$. Then \eqref{Bl} implies that $\mathcal{B}_k$ holds and we obtain $\mathcal{B}\subset\cup \,\mathcal{B}_k$. Thus, it is enough to estimate the probability of the simpler event $\mathcal{B}_k$ and then to apply the union bound. Such an estimation is given by the following lemma. Its proof is given in Appendix \ref{pl1}. Let
$$Z_T=\underset{A\in \mathcal{C}(r,T)}{\sup}\left \vert\dfrac{1}{n} \Sum \left\langle X_i,A\right\rangle^{2}-\Vert A\Vert _{L_2(\Pi\otimes\mu)}^{2}\right \vert.$$
\begin{lemma}\label{l1}
 $$\mathbb P\left (Z_T>\dfrac{5}{12}T+ \dfrac{44\,c^{2}_{\phi}\,l\, r}{\omega_{\min}}\left (\bE \left\Vert \Sigma_R\right\Vert\right )^{2}\right )\leq \exp\left (-\dfrac{c_3nT^{2}}{c^{2}_{\phi}\,l}\right )$$
 where $c_3=\dfrac{1}{128}$.
\end{lemma}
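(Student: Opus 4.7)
The plan is to combine Gin\'e--Zinn symmetrization, the Ledoux--Talagrand contraction principle, and Talagrand's (Bousquet form) concentration inequality, following the same architecture as the analogous lemma in \cite{klopp-general}.

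First I would record the uniform boundedness and variance estimates that hold on $\mathcal{C}(r,T)$. Because $X_i=W_i\phi^T(t_i)$ is a rank-one matrix with $\Vert X_i\Vert\le\Vert W_i\Vert_2\Vert \phi(t_i)\Vert_2\le c_\phi\sqrt l$ by \eqref{eq:basis_assum} and the assumption $\Vert W_i\Vert_2\le 1$, and since every $A\in\mathcal{C}(r,T)$ obeys $\Vert A\Vert_*\le \sqrt r\Vert A\Vert_2\le\sqrt r$, the pairings satisfy
\begin{equation*}
|\langle X_i,A\rangle|\le c_\phi\sqrt{lr},\qquad \langle X_i,A\rangle^2\le c_\phi^2\, lr,
\end{equation*}
while the fourth-moment bound
\begin{equation*}
\bE\langle X,A\rangle^4\le c_\phi^2\, lr\cdot\Vert A\Vert^{2}_{L_2(\Pi\otimes\mu)}\le c_\phi^2\, lr\,T
\end{equation*}
controls the variance of the summands defining $Z_T$.

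Next I would bound $\bE Z_T$. Symmetrization gives $\bE Z_T\le 2\,\bE\sup_{A\in\mathcal{C}(r,T)}|\tfrac1n\sum_i\epsilon_i\langle X_i,A\rangle^2|$. The Ledoux--Talagrand contraction principle applied to the map $u\mapsto u^2$, which is $2c_\phi\sqrt{lr}$-Lipschitz on $[-c_\phi\sqrt{lr},c_\phi\sqrt{lr}]$ and vanishes at $0$, then yields
\begin{equation*}
\bE Z_T\le 8\, c_\phi\sqrt{lr}\,\bE\sup_{A\in\mathcal{C}(r,T)}|\langle \Sigma_R,A\rangle|\le 8\, c_\phi\sqrt l\,r\,\bE\Vert\Sigma_R\Vert,
\end{equation*}
where the last step uses trace/operator duality together with $\Vert A\Vert_*\le\sqrt r$. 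An elementary splitting $ab\le\tfrac{1}{2\eta}a^2+\tfrac{\eta}{2}b^2$, with $\eta$ tuned so that the first term absorbs exactly one half of the target coefficient $5/12$ and with \eqref{RI_exp_orth} inserted to introduce the factor $\omega_{\min}^{-1}$, then gives
\begin{equation*}
\bE Z_T\le\tfrac{5}{24}T+\tfrac{22\,c_\phi^2\, lr}{\omega_{\min}}\bigl(\bE\Vert\Sigma_R\Vert\bigr)^{2}.
\end{equation*}

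Finally I would apply Talagrand's inequality (Bousquet's form) to the centred empirical process indexed by $\mathcal{C}(r,T)$, using the boundedness constant $B=c_\phi^2\, lr$ and variance proxy $V=c_\phi^2\, lr\,T$ identified in the first paragraph, and with deviation parameter $\tfrac{5}{24}T$; combining the resulting tail bound with the mean estimate above produces the coefficient $\tfrac{5}{12}T=\tfrac{5}{24}T+\tfrac{5}{24}T$ demanded by the lemma. Substitution of $V$ and $B$ into Bousquet's bound shows that the leading contribution to the exponent is of order $nT^2/(c_\phi^2\, l)$, the $r$-dependence being absorbed into the numerical constant $c_3$; the lower bound $T\ge\nu$ built into $\mathcal{C}(r,T)$ ensures that this variance-dominated (subgaussian) regime is the relevant one and that the Bernstein correction is negligible.

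The main obstacle is constant bookkeeping: to reproduce the explicit $\tfrac{5}{12}$ and $44$ stated in the lemma, the AM-GM weight $\eta$ and the Talagrand deviation parameter must be chosen at very specific values, and each numerical factor emerging from symmetrization, contraction, duality, and Bousquet's inequality has to be carried through without slack. The underlying machinery is otherwise entirely standard.
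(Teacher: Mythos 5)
Your architecture (symmetrization, contraction, duality, concentration) is the same as the paper's, but two of your intermediate bounds introduce an $r$-dependence that cannot be removed afterwards, and this breaks the statement you are trying to prove. First, you take the envelope $|\langle X_i,A\rangle|\le c_\phi\sqrt{lr}$ (via $\Vert A\Vert_*\le\sqrt r$) and the variance proxy $c_\phi^2\,l\,r\,T$, and feed these into Bousquet's inequality; the resulting exponent is then of order $nT^2/(c_\phi^2\,l\,r)$, and your remark that the $r$-dependence is ``absorbed into the numerical constant $c_3$'' is not legitimate: $r$ is not a numerical constant (in the application of Lemma \ref{thm1} one takes $r=72\,R$ with $R=\rank(A_0)$, which can grow with $s$), and the lemma's exponent $-c_3nT^2/(c_\phi^2\,l)$ must be free of $r$ for the peeling argument with $\nu=c_\phi\sqrt{64\log(d)\,l/(\log(6/5)\,n)}$ to yield the probability $2/d$. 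The fix is the one the paper uses: on $\mathcal{C}(r,T)$ you also have $\Vert A\Vert_2\le 1$, so Cauchy--Schwarz gives the sharper envelope $|\langle X_i,A\rangle|\le \Vert X_i\Vert_2\Vert A\Vert_2\le c_\phi\sqrt l$ with no $r$, and the concentration step (the paper uses Massart's inequality, Theorem 14.2 of \cite{sara}, with deviation $\tfrac19\cdot\tfrac5{12}T$) then produces exactly $\exp\bigl(-c_3nT^2/(c_\phi^2\,l)\bigr)$.

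Second, in the expectation bound your duality step uses only $\Vert A\Vert_*\le\sqrt r$, giving $\bE Z_T\le 8\,c_\phi\sqrt l\,r\,\bE\Vert\Sigma_R\Vert$, a quantity with no $T$ in it; from this the splitting $ab\le\tfrac1{2\eta}a^2+\tfrac\eta2 b^2$ cannot manufacture the term $\tfrac{5}{24}T$, so the inequality $\bE Z_T\le\tfrac{5}{24}T+\tfrac{22\,c_\phi^2\,l\,r}{\omega_{\min}}(\bE\Vert\Sigma_R\Vert)^2$ does not follow as written. The $T$ must enter \emph{before} the AM--GM step: on $\mathcal{C}(r,T)$ one has, by \eqref{RI_exp_orth}, $\Vert A\Vert_*\le\sqrt r\,\Vert A\Vert_2\le\sqrt r\,\Vert A\Vert_{L_2(\Pi\otimes\mu)}/\sqrt{\omega_{\min}}\le\sqrt{rT/\omega_{\min}}$, whence $\bE Z_T\le 8\,c_\phi\sqrt{l\,r\,T/\omega_{\min}}\,\bE\Vert\Sigma_R\Vert$, and only then does the elementary splitting (with the paper's $\tfrac19/\tfrac89$ allocation of $\tfrac5{12}T$ rather than your half--half split) give $\tfrac89\cdot\tfrac5{12}T+\tfrac{44\,c_\phi^2\,l\,r}{\omega_{\min}}(\bE\Vert\Sigma_R\Vert)^2$. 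With these two corrections your argument coincides with the paper's proof; as it stands, both the exponent and the expectation bound are not established.
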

Lemma \ref{l1} implies that $\mathbb P\left (\mathcal{B}_k\right )\leq \exp\left (-\dfrac{c_3\,n\,\alpha^{2k}\,\nu^{2}}{c^{2}_{\phi}\,l}\right )$. Using the union bound we obtain
\begin{equation*}
\begin{split}
\mathbb P\left (\mathcal{B}\right )&\leq \underset{k=1}{\overset{\infty}{\Sigma}}\mathbb P\left (\mathcal{B}_k\right ) 
\leq \underset{k=1}{\overset{\infty}{\Sigma}}\exp\left (-\dfrac{c_3\,n\,\alpha^{2k}\,\nu^{2}}{c^{2}_{\phi}\,l}\right )\\&\leq \underset{k=1}{\overset{\infty}{\Sigma}}\exp\left (-\dfrac{\left (2\,c_3\,n\log(\alpha)\,\nu^{2}\right )k}{c^{2}_{\phi}\,l}\right )
\end{split}
\end{equation*} 
where we used $e^{x}\geq x$. We finally compute for $\nu= c_{\phi}\,\sqrt{\dfrac{64\,\log(d)\,l}{\log\left (6/5\right )\,n}}$
\begin{equation*}
\mathbb P\left (\mathcal{B}\right )\leq \dfrac{\exp\left (-\dfrac{2\,c_3\,n\log(\alpha)\,\nu^{2}}{c^{2}_{\phi}\,l}\right )}{1-\exp\left (-\dfrac{2\,c_3\,n\log(\alpha)\,\nu^{2}}{c^{2}_{\phi}\,l}\right )}=\dfrac{\exp\left (-\log(d)\right )}{1-\exp\left (-\log(d)\right )}.
\end{equation*} 
This completes the proof of Lemma \ref{thm1}.


 \section{Proof of Lemma \ref{l1}}\label{pl1}
 
Our approach is standard: first we show that $Z_T$ concentrates 
around its expectation and then we 
upper bound the expectation.
By definition, $$Z_T=\underset{A\in \mathcal{C}(r,T)}{\sup}\left \vert\dfrac{1}{n} \Sum \left\langle X_i,A\right\rangle ^{2}-\bE\left (\left\langle X,A\right\rangle ^{2}\right )\right \vert.$$
 Note that $$\left \vert \left\langle X_i,A\right\rangle\right \vert \leq  \left\Vert W\right\Vert_{2}\left\Vert \phi(t)\right\Vert_{2}\left\Vert A\right\Vert_{2}\leq c_{\phi}\,\sqrt{l},$$ where we used $\left\Vert W\right\Vert_{2}\leq 1$ and condition \eqref{eq:basis_assum}.
 
  Massart's concentration inequality (see e.g. \cite[Theorem 14.2]{sara}) implies that
\begin{equation}\label{concentration}
\mathbb P\left  (Z_T\geq \bE \left ( Z_T\right )+\dfrac{1}{9}\dfrac{5}{12}T\right )\leq \exp\left (-\dfrac{c_3nT^{2}}{c^{2}_{\phi}\,l}\right ).
\end{equation}
where $c_3=\dfrac{1}{128}$.

Next we bound the expectation $\bE\left ( Z_T\right )$. Using a standard symmetrization argument (see Ledoux and Talagrand \cite{ledoux_talagrand}) we obtain 
\begin{equation*}
\begin{split}
\bE \left ( Z_T\right )&= \bE\left (\underset{A\in \mathcal{C}(r,T)}{\sup}\left \vert\dfrac{1}{n} \Sum \left\langle X_i,A\right\rangle ^{2}-\bE\left (\left\langle X,A\right\rangle ^{2}\right )\right \vert\right )\\&\leq 2\bE\left (\underset{A\in \mathcal{C}(r,T)}{\sup}\left \vert\dfrac{1}{n} \Sum \epsilon_i\left\langle X_i,A\right\rangle ^{2} \right \vert\right )
\end{split}\end{equation*}
where $\{\epsilon_i\}_{i=1}^{n}$ is an i.i.d. Rademacher sequence.  Then, the contraction inequality (see Ledoux and Talagrand \cite{ledoux_talagrand}) yields
 \begin{equation*}
 \begin{split}
 \bE \left ( Z_T\right )&\leq 8\,c_{\phi}\,\sqrt{l}\,\bE\left (\underset{A\in \mathcal{C}(r,T)}{\sup}\left \vert\dfrac{1}{n} \Sum \epsilon_i\left\langle X_i,A\right\rangle\right \vert\right )\\&=8\,c_{\phi}\,\sqrt{l}\,\bE\left (\underset{A\in \mathcal{C}(r,T)}{\sup}\left \vert \left\langle \Sigma_R,A\right\rangle\right \vert\right )
 \end{split}
 \end{equation*}
where $\Sigma_R=\dfrac{1}{n} \Sum \epsilon_i X_i$. For $A\in \mathcal{C}(r,T)$ we have that 
\begin{equation*}
\begin{split}
\left\Vert A \right\Vert_* &\leq \sqrt{r}\left\Vert A\right\Vert_2\\&\leq  \dfrac{\sqrt{r}\left\Vert A\right\Vert_{L_2(\Pi\otimes\mu)}}{\sqrt{\omega_{\min}}}\\&\leq \sqrt{\dfrac{r\,T}{\omega_{\min}}}
\end{split}
\end{equation*}
where we have used \eqref{RI_exp_orth}. 

Then, by duality between nuclear and operator norms, we compute 
 \begin{equation*}
 \begin{split}
 \bE \left ( Z_T\right )&\leq 8\,c_{\phi}\,\sqrt{l}\,\bE\left (\underset{\left\Vert A \right\Vert_*\leq \sqrt{r\,T/\omega_{\min}}}{\sup}\left \vert \left\langle \Sigma_R,A\right\rangle\right \vert\right )\\&\leq 8\,c_{\phi}\,\sqrt{\dfrac{l\,r\,T}{\omega_{\min}}}\,\bE\left ( \left\Vert \Sigma_R\right\Vert\right ).
 \end{split}
 \end{equation*}
Finally, using 
$$\dfrac{1}{9}\dfrac{5}{12}T+8\,c_{\phi}\,\sqrt{\dfrac{l\,r\,T}{\omega_{\min}}}\,\bE\left ( \left\Vert \Sigma_R\right\Vert\right )\leq \left (\dfrac{1}{9}+\dfrac{8}{9}\right ) \dfrac{5}{12}T+\dfrac{44\,c^{2}_{\phi}\,l\,r}{\omega_{\min}}\left (\bE\left ( \left\Vert \Sigma_R\right\Vert\right )\right )^{2}$$
and the concentration bound \eqref{concentration} we obtain that 
$$\mathbb P\left (Z_T>\dfrac{5}{12}T+ \dfrac{44\,c^{2}_{\phi}\,l\,r}{\omega_{\min}}\left (\bE\left ( \left\Vert \Sigma_R\right\Vert\right )\right )^{2}\right )\leq \exp\left (-\dfrac{c_3nT^{2}}{c^{2}_{\phi}\,l}\right )$$
where $c_3=\dfrac{1}{128}$ as stated.


 \section{Proof of Lemma \ref{l2}}\label{pl2}

Using \eqref{1} we compute
\begin{equation*}
\lambda\left ( \Vert \hat A\Vert_1-\Vert A_0\Vert_1\right )\leq 2\left\Vert \Sigma\right\Vert \Vert H\Vert_1.
\end{equation*}
The condition $\lambda\geq 3\left \Vert \Sigma\right\Vert$, the triangle inequality and \eqref{un2}  yield
 \begin{equation*}
 \begin{split}
 \lambda\left (\left\Vert \mathbf P_{A_0}^{\bot}(H)\right\Vert_1- \left\Vert\mathbf P_{A_0}(H)\right\Vert_1\right ) \leq \dfrac{2}{3}\lambda\left (\left\Vert \mathbf P_{A_0}^{\bot}(H)\right\Vert_1+ \left\Vert\mathbf P_{A_0}(H)\right\Vert_1\right ).
 \end{split}
 \end{equation*}
 This implies that
  \begin{equation*}
  \begin{split}
  \left\Vert \mathbf P_{A_0}^{\bot}(H)\right\Vert_1&\leq  5\left\Vert\mathbf P_{A_0}(H)\right\Vert_1.
  \end{split}
  \end{equation*}  
as stated.

  \section{Bounds on the stochastic errors}\label{stochastic}
  In this section we will obtain upper bounds for the stochastic errors $\left\Vert \Sigma\right\Vert$, $\left\Vert \Sigma_R\right\Vert$. Recall that
  \begin{equation}\label{stoch1}
  \Sigma_R=\dfrac{1}{n} \Sum \epsilon_i X_i\qquad\text{and}\qquad \Sigma=\dfrac{1}{n}\Sum \left (W^{T}_i\rho^{(l)}(t_i)+\sigma\,\xi_i\right )X_i
  \end{equation}
  where $\{\epsilon_i\}_{i=1}^{n}$ is an i.i.d. Rademacher sequence.
  
   The following proposition is the matrix version of Bernstein's inequality in the bounded case (see  Theorem 1.6 in \cite{tropp-user}).
   Let $Z_1,\dots ,Z_n$ be independent random matrices with dimensions $m_1\times m_2$. Define
  \begin{equation*}
  \sigma_Z=\max\left \{\left \Vert \dfrac{1}{n}\Sum \bE\left (Z_iZ^{T}_i\right )\right \Vert^{1/2}, \left \Vert \dfrac{1}{n}\Sum \bE\left (Z_i^{^{T}}Z_i\right )\right \Vert^{1/2}\right \}.
  \end{equation*}
  \begin{proposition}\label{pr_bounded}
     Let $Z_1,\dots ,Z_n$ be independent random matrices with dimensions $m_1\times m_2$ that satisfy $\bE(Z_i)=0$. Suppose that  $\Vert Z_i\Vert\leq U$ for some constant $U$ and all $i=1,\dots,n$. Then, for all $t>0$, with probability at least $1-e^{-t}$ we have
     $$ \left\Vert \dfrac{1}{n}\Sum Z_i\right\Vert\leq 2\max \left \{\sigma_Z\sqrt{\dfrac{t+\log(d)}{n}},U\,\dfrac{t+\log(d)}{n}\right \},$$ 
     where $d=m_1+m_2$.
    \end{proposition}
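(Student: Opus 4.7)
The plan is to apply Tropp's matrix Bernstein inequality (Theorem 1.6 of \cite{tropp-user}, the reference cited in the statement itself) to the sum $S = \sum_{i=1}^n Z_i$ and then invert the resulting Bennett-type tail into the clean max-form announced in the proposition. Since the $Z_i$ are independent, zero-mean, and bounded in operator norm by $U$, Tropp's theorem applies verbatim and yields, for every $s \geq 0$,
\begin{equation*}
\mathbb{P}\bigl(\|S\| \geq s\bigr) \;\leq\; d\,\exp\!\left(-\frac{s^{2}/2}{\nu + Us/3}\right),
\end{equation*}
where $\nu = \max\bigl\{\|\sum_i \mathbb{E}(Z_i Z_i^{T})\|,\, \|\sum_i \mathbb{E}(Z_i^{T} Z_i)\|\bigr\} = n\,\sigma_Z^{2}$ by the very definition of $\sigma_Z$ in the statement.

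The second step is a simple rescaling $s = n u$, which converts the bound into a tail bound for $\|n^{-1}\sum_i Z_i\|$:
\begin{equation*}
\mathbb{P}\!\left(\left\|\tfrac{1}{n}\sum_{i=1}^n Z_i\right\| \geq u\right) \;\leq\; d\,\exp\!\left(-\frac{n u^{2}/2}{\sigma_Z^{2} + U u/3}\right).
\end{equation*}
Setting $\tau = t + \log(d)$ and requiring the right-hand side to be at most $e^{-t}$ leads to the quadratic inequality $n u^{2}/2 \geq \tau(\sigma_Z^{2} + Uu/3)$, whose positive root I would compare to the two natural candidates $2\sigma_Z\sqrt{\tau/n}$ and $2U\tau/n$.

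The only mildly technical point is the inversion. I would split into two regimes: if $\sigma_Z\sqrt{\tau/n} \geq U\tau/n$, plug in $u = 2\sigma_Z\sqrt{\tau/n}$ and observe that $Uu/3 \leq \sigma_Z^{2}$, so that $\sigma_Z^{2} + Uu/3 \leq 2\sigma_Z^{2}$ and $nu^{2}/2 = 2\sigma_Z^{2}\tau \geq \tau(\sigma_Z^{2} + Uu/3)$; in the opposite regime, plug in $u = 2U\tau/n$ and check that the Bernstein term $Uu/3$ now dominates the denominator, again giving the required inequality. Taking the max of the two candidates therefore always satisfies the quadratic inequality and yields
\begin{equation*}
u \;\leq\; 2\max\!\left\{\sigma_Z\sqrt{\frac{t+\log(d)}{n}},\; U\,\frac{t+\log(d)}{n}\right\},
\end{equation*}
with the required probability $1 - e^{-t}$. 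The main obstacle is purely bookkeeping in this case analysis; there is no genuine probabilistic difficulty beyond the invocation of Tropp's theorem, which is why the statement is labelled a proposition and not a theorem in the paper.
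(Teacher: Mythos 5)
Your proposal is correct and follows exactly the route the paper intends: the paper states this proposition without proof as a restatement of Tropp's Theorem 1.6, and your contribution is simply to carry out the standard rescaling and tail inversion (choosing $u$ as twice the larger of $\sigma_Z\sqrt{\tau/n}$ and $U\tau/n$ so that the Bennett-type denominator is dominated in each regime), which checks out in both cases. Nothing is missing; this is the same argument, with the bookkeeping the paper leaves implicit written out.
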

 It is possible to extend this result to the sub-exponential case. Set
     \begin{equation*}
     U_i=\inf\left \{K>0\;:\;\bE\exp\left (\Vert Z_i\Vert/K\right )\leq e\right \}.
     \end{equation*}
  The following proposition is obtained by an extension of Theorem 4 in \cite{koltchinskii-remark} to rectangular matrices via self-adjoint dilation (cf., for example 2.6 in \cite{tropp-user}).

  \begin{proposition}\label{pr1}
   Let $Z_1,\dots ,Z_n$ be independent random matrices with dimensions $m_1\times m_2$ that satisfy $\bE(Z_i)=0$. Suppose that  $U_i<U$ for some constant $U$ and all $i=1,\dots,n$. Then, there exists an absolute constant $c^{*}$, such that, for all $t>0$, with probability at least $1-e^{-t}$ we have
   $$ \left\Vert \dfrac{1}{n}\Sum Z_i\right\Vert\leq c^{*}\max \left \{\sigma_Z\sqrt{\dfrac{t+\log(d)}{n}},U\left (\log\dfrac{U}{\sigma_Z}\right )\dfrac{t+\log(d)}{n}\right \},$$ 
   where $d=m_1+m_2$.
  \end{proposition}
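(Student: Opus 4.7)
\textbf{Proof proposal for Proposition \ref{pr1}.} The plan is to reduce the rectangular sub-exponential Bernstein inequality to its Hermitian counterpart (Theorem 4 in \cite{koltchinskii-remark}) via the self-adjoint dilation trick described in Section 2.6 of \cite{tropp-user}. For each rectangular matrix $Z_i \in \mathbb{R}^{m_1 \times m_2}$ I would introduce its self-adjoint dilation
\begin{equation*}
\widetilde{Z}_i \,=\, \begin{pmatrix} 0 & Z_i \\ Z_i^{T} & 0 \end{pmatrix} \,\in\, \mathbb{R}^{d\times d}, \qquad d = m_1+m_2.
\end{equation*}
This is a centred symmetric random matrix (because $\bE Z_i = 0$), and its spectrum is $\pm\{\sigma_j(Z_i)\}$ together with zeros. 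Consequently $\|\widetilde{Z}_i\| = \|Z_i\|$ and, more importantly, $\|\tfrac{1}{n}\sum_i \widetilde{Z}_i\| = \|\tfrac{1}{n}\sum_i Z_i\|$, so the conclusion for the dilated sequence is exactly the conclusion we wish to prove.

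The next step is to identify the variance and Orlicz parameters of the dilated sequence. Direct block multiplication gives
\begin{equation*}
\widetilde{Z}_i^{\,2} \,=\, \begin{pmatrix} Z_i Z_i^{T} & 0 \\ 0 & Z_i^{T} Z_i \end{pmatrix},
\end{equation*}
so that $\bigl\|\tfrac{1}{n}\sum_i \bE \widetilde{Z}_i^{\,2}\bigr\| = \max\bigl(\bigl\|\tfrac{1}{n}\sum_i \bE Z_i Z_i^{T}\bigr\|,\, \bigl\|\tfrac{1}{n}\sum_i \bE Z_i^{T} Z_i\bigr\|\bigr) = \sigma_Z^{2}$, matching the parameter in the statement. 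Since $\|\widetilde{Z}_i\| = \|Z_i\|$ pointwise, $\bE\exp(\|\widetilde{Z}_i\|/K) = \bE\exp(\|Z_i\|/K)$ and hence the Orlicz $\psi_1$-constant of $\|\widetilde{Z}_i\|$ equals $U_i$, which is bounded by $U$ by assumption.

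With these two identifications in hand I would apply Theorem 4 of \cite{koltchinskii-remark} to the symmetric centred sequence $\widetilde{Z}_1,\dots,\widetilde{Z}_n$ in dimension $d = m_1+m_2$, yielding an absolute constant $c^{*}$ such that, for every $t>0$, with probability at least $1-e^{-t}$,
\begin{equation*}
\left\|\frac{1}{n}\sum_{i=1}^{n}\widetilde{Z}_i\right\| \,\leq\, c^{*}\max\!\left\{\sigma_Z\sqrt{\frac{t+\log d}{n}},\; U\log\!\left(\frac{U}{\sigma_Z}\right)\frac{t+\log d}{n}\right\},
\end{equation*}
and then replace the left-hand side by $\|n^{-1}\sum_i Z_i\|$ by the dilation identity above.

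The only non-mechanical aspect is the verification that the Orlicz norm assumption on $\|Z_i\|$ transfers verbatim to $\|\widetilde{Z}_i\|$ and that the variance proxy coincides with $\sigma_Z^{2}$; these are the two points at which the choice of the $\max$-type variance parameter and the operator-norm-based sub-exponential condition in the statement are essential. Once these identifications are made, the result is a direct invocation of the Hermitian case and no further probabilistic work is required.
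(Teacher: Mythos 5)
Your proposal is correct and follows exactly the route the paper indicates: the paper itself only remarks that the proposition "is obtained by an extension of Theorem 4 in \cite{koltchinskii-remark} to rectangular matrices via self-adjoint dilation (cf.\ 2.6 in \cite{tropp-user})", and your dilation argument, with the block-diagonal identity for $\widetilde{Z}_i^{\,2}$ giving $\sigma_Z$ and the pointwise equality $\|\widetilde{Z}_i\|=\|Z_i\|$ giving the transfer of the $\psi_1$-bound $U$, is precisely the verification that remark presupposes. No gap; your write-up simply makes explicit what the paper leaves as a one-line citation.
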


  We use Propositions \ref{pr_bounded} and \ref{pr1} to prove Lemmas \ref{delta} and \ref{Edelta}.

  
 \subsection{Proof of  Lemma \ref{delta}}

 Let $\Sigma_1=\dfrac{1}{n}\Sum W^{T}_i\rho^{(l)}(t_i)X_i$ and $\Sigma_2=\dfrac{1}{n}\Sum \xi_iX_i$. 
Then, we obtain $\Sigma=\Sigma_1+\sigma\,\Sigma_2$. In order to derive an upper bound for $ \left\Vert \Sigma_2\right\Vert$,
we apply Proposition \ref{pr1} to
 \begin{equation*}
 \begin{split}
 Z_i&=\xi_iX_i =\xi_iW_i\phi^{T}(t_i).
 \end{split}
 \end{equation*}  
We need to estimate $\sigma_Z$ and $U$. Note that $Z_i$ is a zero-mean random matrix such that
  \begin{equation*}
  \begin{split}
  \left\Vert Z_i\right\Vert&\leq \vert\xi_i\vert \left\Vert W_i\phi^{T}(t_i)\right\Vert_2 
 = \vert\xi_i\vert \left\Vert W_i\phi^{T}(t_i)\right\Vert_2\\ 
  &= \vert\xi_i\vert \left\Vert W_i\right\Vert_2\left\Vert \phi^{T}(t_i)\right\Vert_2 
 \leq \vert\xi_i\vert \left\Vert \phi^{T}(t_i)\right\Vert_2\\
  &\leq \vert\xi_i\vert\,c_{\phi}\,\sqrt{l}
  \end{split}
  \end{equation*}}
 where we used condition \eqref{eq:basis_assum} and  $\left\Vert W\right\Vert_2\leq 1$. 
Then, Assumption \ref{noise} implies that there exists a constant $K$ such that $U_i\leq K\,c_{\phi}\,\sqrt{l}$ for all $i=1,\dots,n$.
  
  Let us estimate $\sigma_Z$ for $Z=\xi\,W\phi^{T}(t)$. First we compute $\dfrac{1}{n}\Sum\bE\left (Z_i\,Z_i^{T}\right )$:
  \begin{equation}\label{sigma_Z_1}
  \begin{split}
  \dfrac{1}{n}\Sum\bE\left (Z_iZ^{T}_i\right )&= \dfrac{1}{n}\Sum\bE\left (\xi_i^{2}W_i\phi^{T}(t_i)\phi(t_i)W^{T}_i\right )\\
  &= \bE\left (\left\Vert \phi(t)\right\Vert^{2}_2 W\,W^{T}\right )\\
    &=l\,\Omega
  \end{split}
  \end{equation}
  where we used $\bE(\xi^{2})=1$. 
  
  Now we compute $\dfrac{1}{n}\Sum\bE\left (Z_i^{T}\,Z_i\right )$:
  \begin{equation}\label{sigma_Z_2}
    \begin{split}
    \dfrac{1}{n}\Sum\bE\left (Z^{T}_iZ_i\right )&= \dfrac{1}{n}\Sum\bE\left (\xi_i^{2}\phi(t_i)W^{T}_iW_i\phi^{T}(t_i)\right )\\
    &= \bE\left (\phi(t)\phi^{T}(t) \left\Vert W\right\Vert^{2}_{2}\right )\\
    &=\mathrm{tr}\left (\Omega\right )\,\mathbb{I}_l
    \end{split}
    \end{equation}
    where we used that $(\phi_i(\cdot))_{i=1,\dots,\infty}$ is an orthonormal basis in $L_2\left ((0,1),d\mu\right )$.
    
     Equations
\eqref{sigma_Z_1} and \eqref{sigma_Z_2} imply that
  $$\sigma_Z^{2}\leq \left (l\,\omega_{\max}\right )\vee \mathrm{tr}\left (\Omega\right )\quad\text{and}\quad\sigma_Z^{2}\geq l\,\omega_{\max}.$$ Applying Proposition \ref{pr1} we derive that
   for all $t>0$  with probability at least $1-e^{-t}$
  \begin{equation}\label{sigma_2}
 \begin{split}
 \left\Vert \Sigma_2\right\Vert&\leq c^{*}\max \left \{\sqrt{\dfrac{M\,\left (t+\log(d)\right )}{n}},\dfrac{K\,c_{\phi}\,\sqrt{l}\,\left (t+\log(d)\right )\,\log\left (\dfrac{K\,c_{\phi}}{\omega_{\max}}\right )}{n}\right \}
\end{split}
\end{equation}
where $M=\mathrm{tr}(\Omega)\vee \left (l\omega_{\max}\right )$.

  One can estimate  
$ \left\Vert \Sigma_1\right\Vert$ in a similar way.
  We apply Proposition \ref{pr_bounded} to
   \begin{equation*}
  \begin{split}
  Z_i&=W^{T}_i\rho^{(l)}(t_i)X_i\\&=W^{T}_i\rho^{(l)}(t_i)W_i\phi^{T}(t_i).
  \end{split}
  \end{equation*} We begin by proving that $$\bE\left (W^{T}\rho^{(l)}(t)W\phi^{T}(t)\right )=0.$$  Let $W=\left (w_1,\dots,w_p\right )$. The $(m,k)$-th entry of $W^{T}\rho^{(l)}(t)W\phi^{T}(t)$ is equal to $\underset{j=1}{\overset{p}{\Sigma}}w_j\,\rho_j^{(l)}(t)\,w_m\,\phi_k(t)$. By definition $\rho^{(l)}_j(t)=f_j(t)-\underset{i=1}{\overset{l}{\Sigma}}a^{0}_{ji}\phi_i(t)$ and we compute
  \begin{equation*}
    \begin{split}
    \bE\left (\rho^{(l)}_j(t)\phi_k(t)\right )&=\bE\left (\left (f_j(t)-\underset{i=1}{\overset{l}{\Sigma}}a^{0}_{ji}\phi_i(t)\right )\phi_k(t)\right )\\&=\bE\left (f_j(t)\phi_k(t)-\underset{i=1}{\overset{l}{\Sigma}}a^{0}_{ji}\phi_i(t)\phi_k(t)\right )
    \\&=a^{0}_{jk}-a^{0}_{jk}=0
    \end{split}
    \end{equation*}
    since 
$(\phi_i(\cdot))_{i=1,\dots,\infty}$ is an orthonormal basis. Therefore, 
     \begin{equation*}
     \begin{split}
     \bE\left (\underset{j=1}{\overset{p}{\Sigma}}w_j\,\rho_j^{(l)}(t)\,w_m\,\phi_k(t)\right )&=\underset{j=1}{\overset{p}{\Sigma}}\bE_{W}\left (w_j\,w_m\bE_{t}\left (\rho_j^{(l)}(t)\,\phi_k(t)\right )\right )\\&=0
     \end{split}
     \end{equation*}
      
 Next we estimate $U$.  Note that $\rho^{(l)}(t)$ has at most $s-1$ 
non-zero coefficients. Then, Assumption \ref{approximation_sup} and $ \left\Vert W\right\Vert_{2}\leq 1$ imply that $t-$almost surely \\$\left (W^{T}\rho^{(l)}(t)\right )^{2}\leq \dfrac{b^{2}\,(s-1)}{l^{2\gamma}}$ and
  \begin{equation*}
    \begin{split}
    \left\Vert Z_i\right\Vert&\leq \vert W^{T}_i\rho^{(l)}(t_i)\vert \left\Vert W_i\phi^{T}(t_i)\right\Vert\\
    &\leq \dfrac{b\,c_{\phi}\,\sqrt{l\,(s-1)}}{l^{\gamma}}.
    \end{split}
    \end{equation*}
  Let us estimate $\sigma_Z$ for $Z=\left (W^{T}\rho^{(l)}(t)\right )W\phi^{T}(t)$. First we compute $\dfrac{1}{n}\Sum\bE\left (Z_i\,Z_i^{T}\right )$:
    \begin{equation*}
    \begin{split}
    \dfrac{1}{n}\Sum\bE\left (Z_iZ^{T}_i\right )&= \bE\left (\left (W^{T}\rho^{(l)}(t)\right )^{2}W\phi^{T}(t)\phi(t)W^{T}\right )\\
    &= \bE_t\left (\left \Vert \phi(t)\right\Vert^{2}_2\,\bE_W\left ( \left (W^{T}\rho^{(l)}(t)\right )^{2}WW^{T}\right )\right ).
    \end{split}
    \end{equation*}
 We obtain 
  \begin{equation*}
  \begin{split}
  \bE_W\left ( \left (W^{T}\rho^{(l)}(t)\right )^{2}WW^{T}\right )\leq \dfrac{b^{2}\,(s-1)}{l^{2\gamma}}\,\bE\left (WW^{T}\right )
  \end{split}
  \end{equation*}
  where we used $WW^{T}\geq 0$. Finally we obtain
   \begin{equation}\label{sigma_Z_1_2}
 \left\Vert \dfrac{1}{n}\Sum\bE\left (Z_iZ^{T}_i\right ) \right\Vert\leq \dfrac{b^{2}\,(s-1)\,\omega_{\max}\,l}{l^{2\gamma}}.
\end{equation}

  Now we compute $\dfrac{1}{n}\Sum\bE\left (Z_i^{T}\,Z_i\right )$:
  \begin{equation*}\label{sigma_Z_2_2}
      \begin{split}
      \dfrac{1}{n}\Sum\bE\left (Z^{T}_iZ_i\right )
      &= \bE_t\left (\left ( W^{T}\rho^{(l)}(t)\right )^{2}\phi(t)W^{T}W\phi^{T}(t)\right )\\
      &= \bE_t\left (\left ( W^{T}\rho^{(l)}(t)\right )^{2} \left\Vert W\right\Vert^{2}_{2}\phi(t)\phi^{T}(t)\right ).
           \end{split}
      \end{equation*}
      Using $\bE\left (\left\Vert W\right\Vert^{2}_{2}\right )=\mathrm{tr}(\Omega)$ and $\bE_t\left (\phi(t)\phi^{T}(t)\right )=\mathbb{I}_{l}$ we obtain
      \begin{equation}\label{estim_sigma_Z_3_aout}
       \left\Vert \dfrac{1}{n}\Sum\bE\left (Z^{T}_iZ_i\right ) \right\Vert\leq \dfrac{b^{2}\,(s-1)}{l^{2\gamma}}\,\mathrm{tr}(\Omega).
      \end{equation}
Equations 
  \eqref{sigma_Z_1_2} and \eqref {estim_sigma_Z_3_aout} imply that 
    $$\sigma^{2}_Z\leq \dfrac{b^{2}\,(s-1)}{l^{2\gamma}}\left [\mathrm{tr}(\Omega)\vee (l\,\omega_{\max})\right ].$$

      Applying Proposition \ref{pr_bounded}, we derive that
     for all $t>0$  with probability at least $1-e^{-t}$
     \begin{equation} \label{sigma_1}
     \left\Vert \Sigma_1\right\Vert\leq \dfrac{2\,b\,\sqrt{s-1}}{l^{\gamma}}\max \left \{\sqrt{\dfrac{M(t+\log(d))}{n}},\dfrac{c_{\phi}\,\sqrt{l}\left (t+\log(d)\right )}{n}\right \}.
     \end{equation}
The bounds \eqref{sigma_1} and \eqref{sigma_2} imply  that 
    for all $t>0$  with probability at least $1-2e^{-t}$
     \begin{equation*}
     \begin{split}
     \left\Vert \Sigma\right\Vert&\leq \left (\sigma\,c^{*}+\dfrac{2\,b\,\sqrt{s-1}}{l^{\gamma}}\right )\max \left \{\sqrt{\dfrac{M\left (t+\log(d)\right )}{n}},\right. \\&\left .\hskip 4 cm\dfrac{c_{\phi}\,\sqrt{l}\,\left (t+\log(d)\right )\left (\left [K\,\log\left (\dfrac{K}{\omega_{\max}}\right )\right ]\vee 1\right )}{n}\right \}
     \end{split}
     \end{equation*}
      as stated.


\subsection{ Proof of Lemma  \ref{Edelta}}\label{proof_E}

The proof follows the lines of the proof of Lemma 7 in \cite{klopp-rank}. 
We use Proposition~\ref{pr_bounded} with $Z_i=\epsilon_i\,X_i$. As in the proof of Lemma \ref{delta},  we obtain $U=\sqrt{l}$ and $\sigma^{2}_{Z}=\left (\mathrm{tr}(\Omega)\vee \left (l\sigma_{\max}(\Omega)\right )\right )$. Set $M=\left (\mathrm{tr}(\Omega)\vee \left (l\sigma_{\max}(\Omega)\right )\right )$, then Proposition \ref{pr_bounded} implies that for all $t>0$ with probability at least $1-e^{-t}$
\begin{equation}\label{est_norm_sigma_R}
\left\Vert \Sigma_R\right\Vert\leq 2\max \left \{\sqrt{\dfrac{M\left (t+\log(d)\right )}{n}},\dfrac{\sqrt{l}\left (t+\log(d)\right )}{n}\right \}.
\end{equation}
 Set $t^{*}=\dfrac{n\,M}{l}-\log (d)$ so that  $t^{*}$ is the value of $t$ such that the two terms in \eqref{est_norm_sigma_R} are equal. 
Note that \eqref{est_norm_sigma_R} implies that 
\begin{equation}\label{proba_1}
\mathbb P \left (\left\Vert \Sigma_R\right\Vert> t\right )\leq d\exp\left \{-\dfrac{t^{2}\,n}{4\,M}\right \}\qquad\text{for}\qquad t\leq t^{*}
\end{equation}
 and
\begin{equation}\label{proba_2}
\mathbb P \left (\left\Vert \Sigma_R\right\Vert> t\right )\leq d\exp\left \{-\dfrac{t\,n}{2\,\sqrt{l}}\right \}\qquad\text{for}\qquad t\geq t^{*}.
\end{equation}
We set $\nu_1=\dfrac{n}{4\,M}$, $\nu_2=\dfrac{n}{2\,\sqrt{l}}$. By H{\"o}lder's inequality we derive $$\bE\left\Vert \Sigma_R\right\Vert\leq \left (\bE \left\Vert \Sigma_R\right\Vert^{2\log( d)}\right )^{1/(2\log (d))}.$$
Inequalities \eqref{proba_1} and \eqref{proba_2} imply that 
\begin{equation}\label{estEM}
\begin{split}
&\left (\bE \left\Vert \Sigma_R\right\Vert^{2\log (d)}\right )^{1/2\log (d)} =\left ( \overset{+\infty}{\underset{0}{\int}}\mathbb P \left (\left\Vert \Sigma_R\right\Vert> t^{1/(2\log (d))}\right )\mathrm{d}t\right )^{1/2\log (d)}\\&\hskip 0.5 cm\leq
\left (d \overset{+\infty}{\underset{0}{\int}}\exp\{-t^{1/\log (d)}\nu_1\}\mathrm{d}t+d \overset{+\infty}{\underset{0}{\int}}\exp\{-t^{1/(2\log (d)}\nu_2\}\mathrm{d}t
\right )^{1/2\log (d)}\\&\hskip 1 cm\leq \sqrt{e}\left (\log (d)\nu_1^{-\log (d)}\Gamma(\log (d))+2\log(d)\; \nu_2^{-2\log (d)}\Gamma(2\log (d))\right )^{1/(2\log (d))}.
\end{split}
\end{equation}
Recall that Gamma-function satisfies the following inequality
\begin{equation}\label{Gamma}
\Gamma(x)\leq \left (\dfrac{x}{2}\right )^{x-1}   \quad \text{for}\quad x\geq 2, 
\end{equation} 
(see e.g. \cite{klopp-rank}). Plugging \eqref{Gamma} into \eqref{estEM} we compute  
\begin{equation*}
\begin{split}
\bE \left\Vert \Sigma_R\right\Vert& \leq \sqrt{e}\left ((\log (d))^{\log (d)}\nu_1^{-\log (d)}2^{1-\log (d)}\right .\\&+\left .2(\log (d))^{2\log (d)}\nu_2^{-2\log (d)}\right )^{1/(2\log (d))}. 
\end{split}
\end{equation*}
Observe that $n\geq n^{*}$ implies 
$\nu_1\log (d)\leq\nu_2^{2}$ and we obtain
 \begin{equation}\label{concl_esp}
 \begin{split}
 \bE \left\Vert \Sigma_R\right\Vert& \leq \sqrt{\dfrac{2e\log (d)}{\nu_1}}.
 \end{split}
 \end{equation}
We conclude the proof by plugging $\nu_1=\dfrac{n}{4\,M}$ into \eqref{concl_esp}.


%
%

\begin{thebibliography}{10}
\bibitem{sara}
B{\"u}hlmann, P. and van de Geer, S. (2011) 
{\em Statistics for High-Dimensional Data: Methods, Theory and Applications.} Springer.

\bibitem{candes-recht-exact}
Cand{\`e}s, E.J.  and Recht, B. (2009)
\newblock Exact matrix completion via convex optimization.
\newblock {\em Foundations of Computational Mathematics}, \textbf{9(6)}, 717-772.



\bibitem{chiang}
Chiang, C.-T., Rice, J. A. and Wu, C. O. (2001). Smoothing spline esti-
mation for varying coefficient models with repeatedly measured dependent variables. {\em J. Amer. Statist. Assoc.},  {\bf 96}, 605–619.



\bibitem{cleveland_VCM}
Cleveland, W.S., Grosse, E. and Shyu, W.M. (1991)  Local regression models.
{\em Statistical Models in S} (Chambers, J.M. and Hastie, T.J., eds), 309-376. Wadsworth and Books, Pacific Grove.

\bibitem{fan_1999}
Fan, J. and Zhang, W. (1999). Statistical estimation in varying coefficient models. {\em Ann. Statist.}, {\bf 27}, 1491–1518. 

\bibitem{fan}
Fan, J., and Zhang, W. (2008)
Statistical methods with varying coefficient models.
{\em Statistics and Its Interface}, {\bf 1},   179�195.




\bibitem{hastie_VCM}
Hastie, T.J. and Tibshirani, R.J. (1993)  Varying-coefficient models.
{\em J. Roy. Statist. Soc. B.} (Chambers, J.M. and Hastie, T.J., eds), \textbf{55} 757-796. 

\bibitem{hoover}
Hoover, D. R., Rice, J. A., Wu, C. O. and Yang, L.-P. (1998). Non-parametric smoothing estimates of time-varying coefficient models
with longitudinal data. {\em Biometrika}, {\bf 85}, 809–822.

\bibitem{huang_2002}
Huang, J. Z., Wu, C. O. and Zhou, L. (2002). Varying-coefficient models and basis function approximations for the analysis of repeated
measurements. {\em Biometrika}, {\bf 89}, 111–128. 

\bibitem{huang_shen}
Huang, J. Z. and Shen, H. (2004). Functional coefficient regression
models for nonlinear time series: A polynomial spline approach.
{\em Scandinavian Journal of Statistics}, {\bf 31}, 515–534.

\bibitem{huang_2004}
Huang, J. Z., Wu, C. O. and Zhou, L. (2004). Polynomial spline estimation and inference for varying coefficient models with longitudinal
data. {\em Statistica Sinica}, {\bf 14}, 763–788.
\bibitem{kauermann}
Kauermann, G. and Tutz, G. (1999). On model diagnostics using varying coefficient models. {\em Biometrika}, {\bf 86}, 119–128.



\bibitem{kai}
Kai, B., Li, R., and Zou, H. (2011)
New efficient estimation and variable selection methods for 
semiparametric varying-coefficient 
partially linear models. {\em Ann. Stat.}, {\bf 39}, 305-332.


\bibitem{keshavan-few}
Keshavan, R.H., Montanari, A. and Oh, S. (2010)
\newblock Matrix completion from a few entries.
\newblock {\em IEEE Trans. on Info. Th.}, \textbf{56(6)}, 2980-2998.
%
%
\bibitem{klopp-variance}
Klopp, O. (2011) Matrix completion with unknown variance of the noise.
\url{http://arxiv.org/abs/1112.3055}

\bibitem{klopp-general}
Klopp, O. (2012) Noisy low-rank matrix completion with general sampling distribution.
{ \em Bernoulli}, to appear.


\bibitem{klopp-rank}
~{Klopp}, O.(2011)
\newblock {Rank penalized estimators for high-dimensional matrices}.
\newblock {\em Electronic Journal of Statistics}, \textbf{5}, 1161-1183.

%
\bibitem{koltchinskii-remark}
{Koltchinskii}, V. (2011)
\newblock {A remark on low rank matrix recovery
and noncommutative Bernstein type
inequalities}.
\newblock {\em IMS Collections, Festschritt in Honor of J. Wellner.}


\bibitem{Koltchinskii-Tsybakov}
{Koltchinskii}, V., {Lounici}, K. and  {Tsybakov}, A. (2011)
\newblock {Nuclear norm penalization and optimal rates for noisy low rank
  matrix completion}.
\newblock {\em Annals of Statistics}, \textbf{39(5)}, 2302-2329.




%


\bibitem{lian1}
Lian, H. (2012)
Spline Estimator for Simultaneous Variable
Selection and Constant Coefficient Identification in
High-dimensional Generalized Varying-Coefficient
Models. Manuscript.

\bibitem{ledoux_talagrand}
Ledoux, M. and Talagrand, M. {\em Probability in Banach Spaces: Isoperimetry and Processes.}
Springer-Verlag, New York, NY, 1991.




\bibitem{mallat}
{Mallat}, S. (2009) 
\newblock {\em A Wavelet Tour of Signal Processing,}
\newblock {Third Ed., Elsevier, New York}

\bibitem{wainwright-weighted}
{Negahban}, S. and {Wainwright}, M.~J. (2010).
    \newblock {Restricted strong convexity and weighted matrix completion: Optimal bounds with noise}.
  \newblock {\em Journal of Machine Learning Research}, \textbf{13}, 1665--1697.



\bibitem{senturk}
Senturk, D. and Mueller, H. G. (2010) 
Functional varying coefficient models for longitudinal data. 
{\em J. Amer. Statist. Assoc.},  {\bf 105}, 1256-1264.   


\bibitem{tropp-user}
{Tropp}, J.A. (2011)
\newblock {User-friendly tail bounds for sums of random matrices.}
\newblock {\em Found. Comput. Math.}, \textbf{11(4)}.

\bibitem{tsybakov_book}
{Tsybakov}, A. (2010)
\newblock {\em Introduction to Nonparametric Estimation}, Springer Series in Statistics.





\bibitem{wang}
Wang, L., Kai, B., and Li, R. (2009)
Local Rank Inference for Varying Coefficient Models.
{\em J. Amer. Statist. Assoc.},  {\bf 104},  1631-1645.

\bibitem{wu}
Wu, C. O., Chiang, C. T. and Hoover, D. R. (1998). Asymptotic confi-
dence regions for kernel smoothing of a varying-coefficient model
with longitudinal data. {\em J. Amer. Statist. Assoc.}, {\bf 93}, 1388–1402.

%
%
\bibitem{yang}
Yang, L., Park, B.U.,   Xue, L. and Hardle, W. (2006)
Estimation and Testing for Varying Coefficients in
Additive Models With Marginal Integration.
{\em J. Amer. Statist. Assoc.},  {\bf 101},  1212-1227
\end{thebibliography}
\end{document}